\newtheorem{theorem}{Theorem}[section]
\newtheorem{corollary}[theorem]{Corollary}
\newtheorem{definition}[theorem]{Definition}
\newtheorem{lemma}[theorem]{Lemma}
\newtheorem{proposition}[theorem]{Proposition}
\newtheorem{remark}[theorem]{Remark}
\DeclareMathOperator{\diff}{d}
\def\nc{\newcommand}
\def\be{\beta}
\def\lam{\lambda}
\def\D{\la D\ra}
\nc\pa{\partial}
\nc\CC{\mathbb{C}}
\nc\RR{\mathbb{R}}
\nc\QQ{\mathbb{Q}}
\nc\ZZ{\mathbb{Z}}
\nc\NN{\mathbb{N}}
\def\betathree{\beta}
\def\ba{\begin{align}}
\def\bad{\begin{aligned}}
\def\be{\begin{equation}}
\def\ea{\end{align}}
\def\ead{\end{aligned}}
\def\ee{\end{equation}}
\def\e{\eqref}
\def\dalpha{\diff \! \alpha}
\def\dt{\diff \! t}
\def\dtau{\diff \! \tau}
\def\ds{\diff \! s}
\def\dx{\diff \! x}
\def\dxi{\diff \! \xi}
\def\dy{\diff \! y}
\def\fract{\frac{\diff}{\dt}}
\def\defn{\mathrel{:=}}
\def\eps{\varepsilon}
\def\la{\left\vert}
\def\lA{\left\Vert}
\def\bla{\big\vert}
\def\blA{\big\Vert}
\def\le{\leq}
\def\les{\lesssim}
\def\mez{\frac{1}{2}}
\def\ra{\right\vert}
\def\rA{\right\Vert}
\def\bra{\big\vert}
\def\brA{\big\Vert}
\def\tdm{\frac{3}{2}}
\def\xR{\mathbb{R}}
\begin{document}

\title{On the Cauchy problem for the Muskat equation. 
II: Critical initial data}
\author{Thomas Alazard}
\thanks{E-mail address: thomas.alazard@ens-paris-saclay.fr, Universit{\'e} Paris-Saclay, ENS Paris-Saclay, CNRS,
	Centre Borelli UMR9010, avenue des Sciences,
	F-91190 Gif-sur-Yvette, Paris, France}
\author{Quoc-Hung Nguyen}
\thanks{E-mail address: qhnguyen@shanghaitech.edu.cn, ShanghaiTech University, 393 Middle Huaxia Road, Pudong,
			Shanghai, 201210, China.}
\date{}

\setlength{\baselineskip}{5mm}

\begin{abstract}
We prove that the Cauchy problem for the Muskat equation is well-posed 
locally in time for any initial data in the critical space of Lipschitz functions 
with three-half derivative in $L^2$. Moreover, we prove that the 
solution exists globally in time under a smallness assumption.
\end{abstract}

\maketitle
  
\section{Introduction}

The Muskat equation describes the dynamics of the interface separating two fluids in porous media whose velocities obey 
Darcy's law (\cite{darcy1856fontaines,Muskat}). This equation belongs to the family of 
nonlocal parabolic equations that have attracted a lot of attention in recent years. 
Indeed, it has long been observed that one can reduce the Muskat equation 
to an evolution equation for the free surface parametrization (see~\cite{CaOrSi-SIAM90,EsSi-ADE97,PrSi-book,SCH2004}). 
One interesting feature of the Muskat equation is that it 
admits a compact formulation in terms of finite differences, as observed by C\'ordoba 
and Gancedo~\cite{CG-CMP}. More precisely, assume that the free surface is the graph of some function $f=f(t,x)$ with $x\in\xR$. 
Then, C\'ordoba and Gancedo~\cite{CG-CMP} showed that the Muskat equation reduces to
\be\label{n1}
\partial_tf=\frac{1}{\pi}\int_\xR\frac{\partial_x\Delta_\alpha f}{1+\left(\Delta_\alpha f\right)^2}\dalpha,
\ee
where  $\Delta_\alpha f$ is the slope, defined by
\begin{align}\label{eq2.2}
\Delta_\alpha f(x,t)=\frac{f(x,t)-f(x-\alpha,t)}{\alpha}\cdot
\end{align}

It is easily verified that the Muskat equation is 
invariant by the change of unknowns:
\be\label{acritical}
f(t,x)\mapsto f_\lambda(t,x)\defn\frac{1}{\lambda}f\left(\lambda t,\lambda x\right) \qquad (\lambda\neq 0).
\ee
Now, by a direct calculation,
$$
\lA f_\lambda\big\arrowvert_{t=0}
\rA_{\dot{W}^{1,\infty}}=\lA f_0\rA_{\dot{W}^{1,\infty}}\quad ;
\quad
\lA f_\lambda\big\arrowvert_{t=0}
\rA_{ \dot H^{\tdm}}=\lA f_0\rA_{ \dot H^{\tdm}}.
$$
This means that the spaces $\dot{W}^{1,\infty}(\xR)$ and $\dot H^{\tdm}(\xR)$ are critical for the study of the Cauchy problem. Let us clarify that we denoted by 
$\dot W^{1,\infty}(\xR)$ the space of Lipschitz functions, 
and by $H^s(\xR)$ (resp.\ $\dot{H}^{s}(\xR)$) the classical Sobolev (resp.\ homogeneous Sobolev) 
space of order $s$. 
They are equipped with the norm defined by
$$
\lA u\rA_{\dot{W}^{1,\infty}}\defn
\sup_{{\substack{ x,y\in \xR\\ x\neq y}}}\frac{\la u(x)-u(y)\ra}{\la x-y\ra},
$$
and
$$
\lA u\rA_{\dot{H}^{s}}\defn\left(\int_\xR \la \xi\ra^{2s}\bla \hat{u}(\xi)\bra^2\dxi\right)^\mez,
\quad 
\lA u\rA_{H^{s}}^2=\lA u\rA_{\dot{H}^s}^2+\lA u\rA_{L^2}^2.
$$

We are interested in the study of the Cauchy problem for the latter equation. Our main result states that the 
Cauchy problem for the Muskat equation is well-posed 
locally in time for any initial data in the critical space $\dot{W}^{1,\infty}(\xR)\cap {H}^{\tdm}(\xR)$.

Our analysis is inspired by many previous works, and we begin by reviewing the literature on this problem. 
The first well-posedness results were established by Yi~\cite{Yi2003}, 
Ambrose~\cite{Ambrose-2004,Ambrose-2007}, 
C\'ordoba and Gancedo~\cite{CG-CMP}, C\'ordoba, C\'ordoba and Gancedo~\cite{CCG-Annals},
Cheng, Granero-Belinch\'on, 
Shkoller~\cite{Cheng-Belinchon-Shkoller-AdvMath}. In recent years, these results were extended in several directions. 
In particular, the well-posedness of the 
Cauchy problem has been established in many sub-critical spaces: 
see Constantin, Gancedo, Shvydkoy and Vicol~\cite{CGSV-AIHP2017} 
for initial data in the Sobolev space 
$W^{2,p}(\xR)$ for some $p>1$, Deng, Lei and Lin~\cite{DLL} and Camer\'on~\cite{Cameron} 
for initial data in H\"older spaces, and Matioc~\cite{Matioc2}, Alazard and Lazar~\cite{Alazard-Lazar}, Nguyen and Pausader~\cite{Nguyen-Pausader} for initial data in $H^s(\xR)$ with $s>3/2$.

Special features of the Muskat equations were exploited to improve the analysis of the Cauchy problem in several directions. 
Constantin, C{\'o}rdoba, Gancedo, Rodr{\'\i}guez-Piazza 
and Strain~\cite{CCGRPS-AJM2016} (see also \cite{CGSV-AIHP2017,PSt}) proved 
a global well-posedness results assuming 
that the Lipschitz semi-norm is smaller than $1$. 
Deng, Lei and Lin in~\cite{DLL} proved the existence of solutions whose slope can be arbitrarily large. 
Cameron \cite{Cameron} exhibited the existence of a modulus of continuity for the derivative (see also~\cite{Abedin-Schwab-2020}) 
and obtained a global existence result assuming only that 
the product of the maximal and minimal slopes is bounded by~$1$. 
C\'ordoba and Lazar established 
in \cite{Cordoba-Lazar-H3/2} the first global well-posedness result assuming only that 
the initial data is sufficiently smooth and that the critical $\dot H^{3/2}(\xR)$-norm is small enough (see also 
\cite{Gancedo1,Gancedo2,Granero-Scrobogna} for related  global well-posedness results in Wiener spaces in the critical case, for small enough initial data). 
This result was extended to the 3D case by Gancedo and Lazar~\cite{Gancedo-Lazar-H2} for initial data in 
the critical Sobolev space $\dot{H}^2(\xR^2)$. Eventually, in our companion paper~\cite{AN1}, 
we initiated the study of the Cauchy problem for non-Lipschitz initial data. 

For our subject matter, another fundamental component of the background is that 
the Cauchy problem is not well-posed globally in time:  
there are blow-up results for some large enough data 
by Castro, C\'{o}rdoba, Fefferman, 
Gancedo and L\'opez-Fern\'andez~(\cite{CCFG-ARMA-2013,CCFG-ARMA-2016,CCFGLF-Annals-2012}). 
More precisely, they proved the existence of solutions such that 
at initial time $t=0$ the interface is a graph, at a later time $t_1>0$ 
the interface is no longer a graph and then at a subsequent time $t_2>t_1$, 
the interface is $C^3$ but not $C^4$. 

Our main result in this paper is the following

\begin{theorem}\label{main}
$i)$ For any initial data $f_0$ in $\dot W^{1,\infty}(\xR)\cap H^{\tdm}(\xR)$, there exists a time $T>0$ such that 
the Cauchy problem for the Muskat equation has a unique solution
\begin{equation*}
f\in L^\infty\big([0,T];\dot W^{1,\infty}(\xR)\cap H^{\tdm}(\xR)\big) \cap L^2(0,T;\dot H^2(\xR)).
\end{equation*}
$ii)$ Moreover, there exists a positive constant $\delta$ such that, 
for any initial data $f_0$ in $\dot W^{1,\infty}(\xR)\cap  H^{3/2}(\xR)$ satisfying
\begin{equation*}
\big(1+\lA f_0\rA_{\dot W^{1,\infty}}^4\big)\lA f_0\rA_{\dot H^{\tdm}}\leq \delta,
\end{equation*}
the Cauchy problem for the Muskat equation has a unique global solution
\begin{equation*}
f\in L^\infty\big([0,+\infty);\dot W^{1,\infty}(\xR)\cap H^{\tdm}(\xR)\big)
\cap L^2(0,+\infty;\dot H^2(\xR)).
\end{equation*}
\end{theorem}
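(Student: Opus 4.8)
The plan is to derive a single a priori estimate at the critical regularity level and then construct the solution by compactness, so I begin with the estimate. Linearizing \eqref{n1} about a flat interface gives $\partial_t f=-|D|f$, a nonlocal heat equation of order one; the associated energy identity $\tfrac{1}{2}\frac{\diff}{\dt}\lA f\rA_{\dot H^{3/2}}^2+\lA f\rA_{\dot H^2}^2=0$ explains why the space $L^\infty_t\big(\dot W^{1,\infty}\cap H^{3/2}\big)\cap L^2_t\dot H^2$ is the natural one: the order-one parabolic smoothing gains exactly the half-derivative that takes $\dot H^{3/2}$ into $L^2_t\dot H^2$. The whole problem is to show that this gain survives the nonlinearity at critical regularity, where no derivative may be wasted.

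Two structural facts make this possible. First, the known maximum principle for the slope (\cite{CCG-Annals,CCGRPS-AJM2016}) gives $\lA f(t)\rA_{\dot W^{1,\infty}}\le \lA f_0\rA_{\dot W^{1,\infty}}=:L$ for all $t$, so the Lipschitz seminorm never grows. Second, paralinearizing \eqref{n1} I expect to write
\begin{equation*}
\partial_t f + T_{\ell}\,|D| f = R[f], \qquad \ell=\frac{1}{1+(\partial_x f)^2}\ge \frac{1}{1+L^2},
\end{equation*}
where $T_\ell$ is a paraproduct whose coefficient is bounded below by the previous step. Testing the $\dot H^{3/2}$-energy against this equation, the principal term (after symmetrising $T_\ell$) produces a \emph{coercive} dissipation, controlled from below by $c\,(1+L^2)^{-1}\lA f\rA_{\dot H^2}^2$, while the commutator of $|D|^{3/2}$ with the coefficients is lower order.

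The heart of the matter is the estimate of $R[f]$ and of the commutator terms at the critical level. The mechanism I expect is that each such term, after pairing with $|D|^{3/2}f$ and integrating, is bounded by $C(L)\,\lA f\rA_{\dot H^{3/2}}\lA f\rA_{\dot H^2}^2$ with $C(L)\les 1+L^2$ (every top-order factor of $\dot H^2$ appearing quadratically, accompanied by one factor that is small in $\dot H^{3/2}$). Establishing these bounds is the technical core: one exploits the finite-difference form \eqref{eq2.2}, decomposes the bilinear and trilinear interactions into paraproduct pieces, and uses the tangential difference-quotient structure together with the precise cancellations of the Muskat kernel. This is the step I expect to be the main obstacle, since at critical regularity there is no room to trade derivatives and one must rely on genuine structure rather than crude Sobolev embeddings. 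Granting these estimates, the energy inequality reads
\begin{equation*}
\frac{\diff}{\dt}\lA f\rA_{\dot H^{3/2}}^2 + \frac{c}{1+L^2}\lA f\rA_{\dot H^2}^2
\le C(L)\,\lA f\rA_{\dot H^{3/2}}\lA f\rA_{\dot H^2}^2 .
\end{equation*}
Because the dissipation carries the factor $(1+L^2)^{-1}$ and $C(L)\les 1+L^2$, the relevant smallness parameter is $(1+L^2)^2\lA f\rA_{\dot H^{3/2}}\sim(1+L^4)\lA f\rA_{\dot H^{3/2}}$; for part $ii)$ the hypothesis $(1+L^4)\lA f_0\rA_{\dot H^{3/2}}\le\delta$ makes the right-hand side absorbable for all time, yielding the global bound, while for part $i)$ the same absorption holds on a short interval $[0,T]$ on which $\lA f\rA_{\dot H^{3/2}}$ stays below $c/(2C(L)(1+L^2))$, with $T$ depending only on $\lA f_0\rA_{\dot W^{1,\infty}\cap H^{3/2}}$.

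It remains to turn the a priori estimate into existence, uniqueness and continuity. I would regularise the data, $f_0^\eps=\chi_\eps * f_0\in H^\infty$, solve by the known subcritical $H^s$ theory ($s>3/2$), and note that the estimates above are uniform in $\eps$ on the common interval $[0,T]$; the uniform bounds in $L^\infty_t H^{3/2}\cap L^2_t\dot H^2$ together with the equation give uniform control of $\partial_t f^\eps$ in a negative-order space, so an Aubin--Lions argument yields a subsequence converging strongly enough to pass to the limit in \eqref{n1}. Uniqueness is obtained by a contraction estimate one level below the energy: for two solutions the difference $g=f_1-f_2$ solves a parabolic equation whose coefficients are controlled by the $L^\infty_t H^{3/2}\cap L^2_t\dot H^2$ norms of $f_1,f_2$, and a Gr\"onwall estimate for $\lA g\rA_{\dot H^{1/2}}$ (whose dissipation lives in $\dot H^1$) forces $g\equiv0$. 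The same energy method upgrades the weak-$*$ continuity of the limit to $f\in C([0,T];H^{3/2})$.
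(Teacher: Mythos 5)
Your part $ii)$ follows the paper's strategy in outline (an $\dot H^{3/2}$ energy inequality whose dissipation carries the weight $(1+\lA \partial_x f\rA_{L^\infty}^2)^{-1}$, absorbed under the smallness hypothesis), but it rests on a false input. There is no ``known maximum principle for the slope'' for general data: the monotonicity of $\lA \partial_xf\rA_{L^\infty}$ is only proved when $\lA \partial_xf_0\rA_{L^\infty}<1$, and the turnover results of Castro, C\'ordoba, Fefferman, Gancedo and L\'opez-Fern\'andez show that for large data the slope can become infinite, so no such monotonicity can hold in general. The paper replaces it with Lemma~\ref{L:2.1}, the differential inequality $\fract\lA f\rA_{\dot W^{1,\infty}}\le C_0\lA f\rA_{\dot H^2}^2+(\text{regularization error})$: the Lipschitz seminorm is allowed to grow, but its growth is dominated by the time-integrated dissipation and is closed inside the bootstrap together with the Sobolev estimate. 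Your argument for $ii)$ can be repaired by this substitution.

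The serious gap is in part $i)$. You claim the absorption works on a short interval on which $\lA f\rA_{\dot H^{3/2}}$ stays below $c/(2C(L)(1+L^2))$, with $T$ depending only on $\lA f_0\rA_{\dot W^{1,\infty}\cap H^{3/2}}$. If $\lA f_0\rA_{\dot H^{3/2}}$ exceeds that threshold there is no such interval at all --- the critical norm does not become small for short times --- so your mechanism only starts when the data are already small, i.e.\ you have re-proved $ii)$, not $i)$. Moreover a lower bound on $T$ depending only on the norm in $\dot W^{1,\infty}(\xR)\cap H^{3/2}(\xR)$ is provably impossible: the rescaling \eqref{acritical} with $\lambda\ge1$ preserves $\lA f_0\rA_{\dot W^{1,\infty}}$ and $\lA f_0\rA_{\dot H^{3/2}}$ and decreases $\lA f_0\rA_{L^2}$, so such a bound would yield global existence for arbitrary data, contradicting the turnover results. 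This obstruction is exactly what the paper is built to circumvent: for each individual $f_0$ one chooses a slowly growing weight $\phi$ with $|D|^{3/2}\phi(|D|)f_0\in L^2$ (quantifying the smallness of the high-frequency tail of the critical norm), proves the energy estimate in the data-dependent space $\mathcal{X}^{\tdm,\phi}$ (Lemma~\ref{L:3.4}, Proposition~\ref{P:3.3}), and uses the extra decay factor $\mu_\phi(t)=\phi(B_\phi/A_\phi)^{-1}$ in front of the nonlinear term to close a bootstrap on a time depending on $\phi$ and $\lA f_0\rA_{\mathcal{X}^{\tdm,\phi}}$, hence on the data itself and not on its critical norm. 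Without a device of this kind (a frequency splitting or a data-dependent norm), your part $i)$ does not close. The compactness and uniqueness steps you sketch are fine and match \S\ref{S:Transfer} and \S\ref{S:3.5}.
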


Some remarks are in order.

$\bullet$ Let us discuss statement $ii)$ about  the global well-posedness component of this result. This is a $2D$ analogous to 
the recent result by Gancedo and Lazar~\cite{Gancedo-Lazar-H2} for 
the $3D$ problem; it improves on a previous result 
by C\'ordoba and Lazar~\cite{Cordoba-Lazar-H3/2} 
which proves a similar global existence result for 
the $2D$-problem with a similar smallness assumption, but under the extra assumption that the initial 
data belongs to $H^{5/2}(\xR)$. 

$\bullet$ We now come to statement $i)$ about the local well-posedness result for arbitrary initial data. 
This is, in our opinion, the main new result in this paper. 
Since we are working in a critical space, this result is optimal in several directions. 

Firstly, it follows from the results about singularity formation by Castro, C\'{o}rdoba, Fefferman, 
Gancedo and L\'opez-Fern\'andez~(\cite{CCFG-ARMA-2013,CCFG-ARMA-2016,CCFGLF-Annals-2012}) that one cannot solve the 
Cauchy problem for a time $T$ which depends only on 
the norm of $f_0$ in $\dot W^{1,\infty}(\xR)\cap \dot H^{3/2}(\xR)$. 
Otherwise, one would obtain a global existence result for any initial data 
by an immediate scaling argument using~\e{acritical}. 
Notice that this argument 
does not contradict our main result: it means 
instead that the time of existence must 
depend on the initial data itself, and not only on its norm. 

The previous discussion shows that one cannot prove statement $i)$ by using classical Sobolev energy estimates. 
This in turn poses new challenging questions since on the other hand the Muskat equation is a quasi-linear equation. 
To overcome this problem, we will estimate 
the solution for a norm whose definition depends on the initial data. 

$\bullet$ We will also prove a result which elaborates on the previous discussion, stating 
that whenever one controls a bigger norm than the critical one, 
the time of existence is bounded from below on a neighborhood of the initial data. 

To introduce this result, let us fix some notations.

\begin{definition}\label{defi:D}
Given a real number $s\ge 0$ and 
a function $\phi\colon [0,\infty)\to [1,\infty)$ satisfying the following assumptions:
\begin{enumerate}[$({{\rm H}}1)$]
\item\label{H1} $\phi$ is increasing and $\lim\phi(r)=\infty$ when $r$ goes to $+\infty$;
\item\label{H2} there is a positive constant $c_0$ such that $\phi(2r)\leq c_0\phi(r)$ for any $r\geq 0$; 
\item\label{H3} the function $r\mapsto \phi(r)/\log(4+r)$ is decreasing on  $[0,\infty)$.
\end{enumerate}
Then $|D|^{s,\phi}$ denotes the Fourier multiplier with symbol $|\xi|^s\phi(|\xi|)$, so that
\begin{equation*}
\mathcal{F}( |D|^{s,\phi}f)(\xi)=|\xi|^s\phi(|\xi|) \mathcal{F}(f)(\xi).
\end{equation*}
Moreover, we define the space
$$
\mathcal{X}^{s,\phi}(\xR)=\{ f\in \dot{W}^{1,\infty}(\xR)\cap L^2(\xR)\, :\, \D^s \phi(\la D_x\ra)f\in L^2(\xR)\},
$$
equipped with the norm
$$
\lA f\rA_{\mathcal{X}^{s,\phi}}\defn \lA f\rA_{\dot{W}^{1,\infty}}+\lA f\rA_{L^2}
+\left(\int_\xR \la \xi\ra^{2s}(\phi(\la\xi\ra))^2\bla\hat{f}(\xi)\bra^2\dxi\right)^\mez.
$$
\end{definition}
\begin{remark}
The Fourier multiplier $|D|^{s,\phi}$ with 
$\phi(r)=\log(2+r)^a$
was introduced and studied in~\cite{BN18a,BN18b,BN18d} for $s\in [0,1)$ (also see \cite{Ng}). 
\end{remark}
\begin{theorem}\label{T2}
Consider a real number $M_0>0$ and a 
function $\phi$ satisfying assumptions~$(\rm{H}\ref{H1})$--$(\rm{H}\ref{H3})$ in Definiton~\ref{defi:D}. 
Then there exists a time $T_0>0$ such that, 
for any initial data $f_0$ in $\mathcal{X}^{\tdm,\phi}(\xR)$ 
satisfying 
$$
\lA f_0\rA_{\mathcal{X}^{\tdm,\phi}}\le M_0,
$$
the Cauchy problem for the Muskat equation has a unique solution
\begin{equation}
f\in L^\infty\big([0,T_0];\dot W^{1,\infty}(\xR)\cap {H}^{\tdm}(\xR)\big) \cap L^2(0,T_0;\dot H^2(\xR)).
\end{equation}
\end{theorem}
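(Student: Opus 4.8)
The plan is to reduce Theorem~\ref{T2} to a single \emph{a priori} estimate, valid on a time interval whose length depends only on $M_0$ and $\phi$, for the weighted critical energy and its associated dissipation
$$
E_\phi(t)\defn \lA |D|^{\tdm,\phi}f(t)\rA_{L^2}^2,\qquad D_\phi(t)\defn \lA |D|^{2,\phi}f(t)\rA_{L^2}^2,
$$
where $|D|^{\tdm,\phi}$ and $|D|^{2,\phi}$ are the Fourier multipliers of Definition~\ref{defi:D}; note that $\lA f_0\rA_{\mathcal X^{\tdm,\phi}}\le M_0$ forces $E_\phi(0)\le M_0^2$. The single feature that upgrades the datum-dependent time of Theorem~\ref{main}$\,i)$ to a time uniform over the ball $\{\lA f_0\rA_{\mathcal X^{\tdm,\phi}}\le M_0\}$ is the elementary high-frequency bound supplied by $(\mathrm{H}\ref{H1})$: since $\phi$ is increasing, for any threshold $N>0$ and any $g$,
$$
\lA P_{\ge N}\,g\rA_{\dot H^{\tdm}}\le \frac{1}{\phi(N)}\,\lA |D|^{\tdm,\phi}g\rA_{L^2},
$$
$P_{\ge N}$ denoting the projection onto $\{|\xi|\ge N\}$. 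Because $\phi(N)\to\infty$, the high-frequency part of the \emph{critical} norm is small once $N$ is large, \emph{uniformly} over the $M_0$-ball, whereas in $\dot H^{\tdm}$ alone the same smallness holds only beyond a threshold depending on the individual datum. This is precisely the gap between Theorem~\ref{T2} and Theorem~\ref{main}$\,i)$.

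Two preliminary reductions render every constant a function of $M_0$ only. First, by the maximum principle for the slope of C\'ordoba--Gancedo one has $\lA\partial_x f(t)\rA_{L^\infty}\le\lA\partial_x f_0\rA_{L^\infty}\le M_0$ for as long as the solution exists, so the $\dot W^{1,\infty}$ bound needs no separate estimate and the principal coefficient $a\defn(1+(\partial_x f)^2)^{-1}$ of the linearized flow obeys $a\ge (1+M_0^2)^{-1}$. Second, paralinearizing~\e{n1} in the spirit of the quoted references, I would write $\partial_t f+\mathcal L_f f=\mathcal N(f)$, where $\mathcal L_f$ is elliptic of order one with principal symbol comparable to $a(x)|\xi|$ and $\mathcal N(f)$ gathers better-behaved remainders. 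Applying $|D|^{\tdm,\phi}$ and pairing with $|D|^{\tdm,\phi}f$ yields, after an integration by parts using the uniform lower bound on $a$,
$$
\tfrac12\frac{d}{dt}E_\phi+c(M_0)\,D_\phi\le \big|\langle[\,|D|^{\tdm,\phi},\mathcal L_f\,]f,\,|D|^{\tdm,\phi}f\rangle\big|+\big|\langle |D|^{\tdm,\phi}\mathcal N(f),\,|D|^{\tdm,\phi}f\rangle\big|,
$$
with $c(M_0)>0$. Here the remaining hypotheses on $\phi$ are decisive. Since the symbol $|\xi|^{\tdm}\phi(|\xi|)$ is radial and $x$-independent, the commutator with $\mathcal L_f$ is governed by $\partial_\xi(|\xi|^{\tdm}\phi(|\xi|))$, and $(\mathrm{H}\ref{H3})$ guarantees $r\phi'(r)\les\phi(r)/\log(4+r)$, so that differentiating $\phi$ costs less than any positive power of the frequency and the weight behaves, to leading order, like the unweighted $|D|^{\tdm}$; meanwhile the doubling property $(\mathrm{H}\ref{H2})$ is what permits $\phi(|D|)$ to be transferred across Bony paraproducts with bounded loss, so that every commutator and remainder term yields to the standard paradifferential calculus.

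The estimate I would aim for is obtained by splitting the rough coefficient $a$ (hence the genuinely critical factor) into low and high frequencies, producing a bound of the schematic form
$$
\tfrac12\frac{d}{dt}E_\phi+c(M_0)\,D_\phi\le C(M_0)\,\lA P_{\ge N}f\rA_{\dot H^{\tdm}}\,D_\phi+C(M_0,N)\big(1+E_\phi\big),
$$
in which the high-frequency portion of the coefficient carries the full dissipation, while the low-frequency portion — involving only smooth, bounded data — is subcritical and produces a Gronwall term. Running a continuity argument under the bootstrap assumption $\sup_{[0,T_0]}E_\phi\le 4M_0^2$, the first displayed inequality of this paragraph gives $\lA P_{\ge N}f\rA_{\dot H^{\tdm}}\le\phi(N)^{-1}E_\phi^{\mez}\le 2M_0/\phi(N)$, so choosing $N=N(M_0,\phi)$ with $2C(M_0)M_0/\phi(N)\le c(M_0)/2$ absorbs the first term into $c(M_0)D_\phi/2$. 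The remaining contribution is then bounded by $C(M_0,N)(1+4M_0^2)$, whose time-integral is $O(T_0)$; choosing $T_0=T_0(M_0,N)=T_0(M_0,\phi)$ small therefore yields $\sup_{[0,T_0]}E_\phi+\int_0^{T_0}D_\phi\le 2M_0^2$, and the bootstrap closes. Together with the Lipschitz bound from the maximum principle, this is exactly the announced regularity $f\in L^\infty([0,T_0];\dot W^{1,\infty}\cap H^{\tdm})\cap L^2(0,T_0;\dot H^2)$, the weight $\phi\ge1$ only reinforcing it.

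It remains to convert the \emph{a priori} bound into existence and uniqueness. Existence I would obtain by a standard approximation — mollifying the nonlinearity (or adding a vanishing viscosity $\eps\partial_x^2$) to produce global smooth approximate solutions, deriving the same bound on $[0,T_0]$ with $T_0$ independent of $\eps$, and passing to the limit by Aubin--Lions compactness, the parabolic gain $L^2_t\dot H^2$ supplying the strong convergence needed for the nonlinearity. Uniqueness and continuous dependence I would establish by estimating the difference $f_1-f_2$ of two solutions in a lower-regularity norm such as $L^2$ or $\dot H^{\mez}$, where the quasilinear terms are tamed by the $L^2_t\dot H^2$-control of each solution and a linear Gronwall inequality closes. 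The main obstacle throughout is \textbf{the critical, $\phi$-weighted paradifferential analysis underlying the energy identity}: one must simultaneously produce a dissipation coercive with a constant depending only on $M_0$ and exhibit the clean low/high-frequency splitting of the single critical factor, with $\phi(|D|)$ handled through $(\mathrm{H}\ref{H2})$--$(\mathrm{H}\ref{H3})$. Every gain of uniformity in the existence time is encoded in that splitting.
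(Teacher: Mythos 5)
There is a genuine gap at the foundation of your reduction: you invoke ``the maximum principle for the slope of C\'ordoba--Gancedo'' to conclude $\Vert \partial_x f(t)\Vert_{L^\infty}\le \Vert \partial_x f_0\Vert_{L^\infty}\le M_0$ for as long as the solution exists. No such unconditional maximum principle is available. The maximum principle of C\'ordoba--Gancedo concerns the amplitude $\Vert f\Vert_{L^\infty}$; for the slope, monotonicity is known only under smallness hypotheses such as $\Vert \partial_x f_0\Vert_{L^\infty}<1$ (\cite{CCGRPS-AJM2016}), and the turning results of \cite{CCFGLF-Annals-2012} show that the slope of a graph solution can in fact become infinite. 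This matters because the coercivity constant of your dissipation is $(1+\Vert \partial_x f(t)\Vert_{L^\infty}^2)^{-1}$: by assuming the slope is a priori bounded by $M_0$ you assume away one of the two quantities the bootstrap must propagate. What the paper actually proves (Lemma~\ref{L:2.1}) is only the differential inequality $\frac{d}{dt}\Vert f(t)\Vert_{\dot W^{1,\infty}}\le C_0\Vert f(t)\Vert_{\dot H^2}^2+C_0\varepsilon^{\beta_0}\Vert f(t)\Vert_{\dot C^{2,\beta_0}}$, so the Lipschitz norm may grow and must be controlled \emph{through} the dissipation integral $\int_0^t B_\phi\,d\tau$; the bootstrap set therefore couples $\sup_\tau A_\phi(\tau)\le 5M_0$ with $\int_0^t\mu_\phi^2 B_\phi\,d\tau\le 2/(3C_0)$ precisely so as to recover $\Vert\partial_x f(t)\Vert_{L^\infty}\le 2+\Vert\partial_x f_0\Vert_{L^\infty}$ along the way. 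Moreover the extra H\"older term in \eqref{a1} is the reason the paper regularizes with the viscosity $\vert\log\varepsilon\vert^{-1}\partial_x^2$ and proves the smoothing estimate of Proposition~\ref{P:2.6}; your generic ``mollify or add $\varepsilon\partial_x^2$'' skips the step that makes the approximation compatible with the Lipschitz estimate. Without some substitute for this coupled control, your constant $c(M_0)$ has no reason not to degenerate before time $T_0$.

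Separately, your mechanism for making $T_0$ uniform over the ball --- splitting the critical coefficient at a frequency $N$ and using $\Vert P_{\ge N}f\Vert_{\dot H^{3/2}}\le \phi(N)^{-1}\Vert\,\vert D\vert^{3/2,\phi}f\Vert_{L^2}$ --- is not the paper's. The paper extracts the small factor $\mu_\phi(t)=\bigl(\phi(B_\phi(t)/A_\phi(t))\bigr)^{-1}$ from the interpolation inequalities of Lemma~\ref{L:3.5}, obtains \eqref{a2}, and then defines $T_0$ through the finite supremum $\mathcal{E}(r,m)=\sup_{\rho\ge 0}\bigl\{C_2(\sqrt r+r)\phi(\rho/r)^{-1}\rho-\tfrac{C_1}{2}\tfrac{\rho}{m}\bigr\}$, which plays the role of your Gronwall term. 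Both devices exploit $({\rm H}1)$ in the same spirit and your variant is plausibly workable, but as written it is schematic: the claimed low-frequency bound $C(M_0,N)(1+E_\phi)$ carrying no dissipation would have to be checked against the actual structure of the Muskat nonlinearity (compare the quantity $Q(f)$ in Lemma~\ref{L:3.4}), and this is where the real work of the companion paper lies.
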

\begin{remark}Statement $i)$ in Theorem~\ref{main} is a consequence of Theorem~\ref{T2}. 
Indeed, it is easily seen that (cf \cite[Lemma~$3.8$]{AN1}), for any $f_0$ in the critical space 
$\dot W^{1,\infty}(\xR)\cap \dot{H}^{\tdm}(\xR)$, one may find a function $\phi$ 
such that $f_0$ belongs to $\mathcal{X}^{\tdm,\phi}(\xR)$ (and satisfying assumptions~$(\rm{H}\ref{H1})$--$(\rm{H}\ref{H3})$ in Definiton~\ref{defi:D}). 
\end{remark}

Theorem~\ref{main} and Theorem~\ref{T2} are proved in the next section.

\subsection*{Acknowledgments} 
\noindent  T.A.\ acknowledges the SingFlows project (grant ANR-18-CE40-0027) 
of the French National Research Agency (ANR).  Q-H.N.\ 
is  supported  by the Shanghai Tech University startup fund and the National Natural Science Foundation of China (12050410257).

The authors would like to thank the referees for their comments, which help to improve the presentation of this article, 
as well as Gustavo Ponce for pointing out a mistake in a preliminary version.
\section{Proof}

\subsection{Regularization}
In order to rigorously justify the computations, 
we want to handle smooth functions (hereafter, a `smooth function' is by definition 
a function that belongs to $C^1([0,T];H^\mu(\xR))$ for any $\mu\in [0,+\infty)$ and some $T>0$). 
To do so, we must regularize the initial data and also consider an approximation of the 
Muskat equation. For our purposes, we further need to consider a regularization of the 
Muskat equation 
which will be compatible with the Sobolev and Lipschitz estimates. 
It turns out that this is a delicate technical problem. 

Our strategy will consist in smoothing the equation in two different ways: 
$i)$ by introducing a cut-off function in the singular integral, removing wave-length shorter than 
some parameter $\eps$ and $ii)$ by 
adding a parabolic term of order $2$ with a small viscosity of size $\la \log(\eps)\ra^{-1}$. 

More precisely, we introduce the following Cauchy problem depending on the parameter $\eps\in (0,1]$: 
\be\label{n2}
\left\{
\begin{aligned}
&\partial_tf-|\log(\varepsilon)|^{-1}\partial_x^2 f
=\frac{1}{\pi}\int_\xR\frac{\partial_x\Delta_\alpha f}{1+\left(\Delta_\alpha f\right)^2}\left(1-\chi\left(\frac{\alpha}{\varepsilon}\right)\right)\dalpha,\\
&f\arrowvert_{t=0}=f_0\star \chi_\eps,
\end{aligned}
\right.
\ee
where $\chi_\eps(x)=\eps^{-1}\chi(x/\eps)$ where 
$\chi$ is a smooth bump function satisfying $0\le \chi\le 1$ and
$$
\chi(y)=\chi(-y),\quad 
\chi(y)=1 \quad\text{for}\quad |y|\le \frac14, \quad \chi(y)=0 \quad\text{for }\la y\ra\ge 2,\quad 
\int_\xR\chi \dy=1.
$$
The equation~\e{n2} does not belong to a general class of parabolic equations. 
However, we will see that it can be studied by standard tools in functional analysis together 
with two estimates for the nonlinearity in the Muskat equation which plays a central role in 
our analysis. 

\begin{proposition}\label{P:initiale}
For any $\eps$ in $(0,1]$ and any initial data $f_0$ 
in $H^{\tdm}(\xR)$, there exists a unique global in time solution $f_\eps$ satisfying
$$
f_\eps\in C^1([0,+\infty);H^\infty(\xR)).
$$
\end{proposition}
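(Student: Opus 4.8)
The plan is to regard \e{n2} as a semilinear parabolic equation
$$
\partial_t f = \nu \partial_x^2 f + \mathcal{N}_\eps(f), \qquad \nu \defn |\log(\eps)|^{-1} > 0,
$$
where $\mathcal{N}_\eps(f)$ denotes the truncated singular integral on the right-hand side, and where the initial datum $f_0 \star \chi_\eps$ belongs to $H^\infty(\xR)$ (its Fourier transform $\hat f_0(\xi)\hat\chi(\eps\xi)$ decays rapidly). Since $\eps$ is fixed here, I do not seek any bound uniform in $\eps$, so crude estimates exploiting the two regularizations suffice. The first task is to record that $\mathcal{N}_\eps$ is a well-defined, smooth nonlinear map on Sobolev spaces. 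Three structural facts make this routine: the cutoff $1-\chi(\alpha/\eps)$ removes the singularity at $\alpha=0$, so the kernel $\alpha^{-1}(1-\chi(\alpha/\eps))$ is bounded by $4/\eps$; the denominator satisfies $1+(\Delta_\alpha f)^2\ge 1$; and the finite-difference numerator $\partial_x f(x)-\partial_x f(x-\alpha)$, together with the decay of $f\in H^\mu$, yields absolute convergence of the $\alpha$-integral at infinity once the odd part of the kernel is isolated. Writing $\mathcal{N}_\eps(f)$ as a convolution-type operator (bounded on every $H^\mu$) applied to $\partial_x f$, plus lower-order remainders, one checks that $\mathcal{N}_\eps$ maps $H^\mu$ into $H^{\mu-1}$ and is Lipschitz on bounded sets, with constants depending on $\eps$ and on $\lA f\rA_{\dot W^{1,\infty}}$.

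Given this, local existence follows from a contraction-mapping argument on the Duhamel formulation
$$
f(t) = e^{\nu t \partial_x^2}(f_0\star\chi_\eps) + \int_0^t e^{\nu(t-s)\partial_x^2}\mathcal{N}_\eps(f(s))\ds.
$$
The heat semigroup $e^{\nu t \partial_x^2}$ is bounded on each $H^\mu$ and gains one derivative with the time-integrable factor $(\nu t)^{-1/2}$, which exactly compensates the one-derivative loss of $\mathcal{N}_\eps$; hence the Duhamel map is a contraction on a ball of $C([0,T];H^\mu)$ for a small time $T>0$ and some fixed $\mu>\tdm$ (so that $H^\mu\hookrightarrow \dot W^{1,\infty}$). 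Uniqueness on $[0,T]$ follows from the same Lipschitz bound applied to the difference of two solutions, together with Gr\"onwall's inequality. Parabolic smoothing then upgrades the regularity: bootstrapping the Duhamel formula (or differentiating the equation and reusing the gain of the semigroup) shows $f(t)\in H^\infty$, and since the datum already lies in $H^\infty$ one obtains $f\in C^1([0,T];H^\mu)$ for every $\mu$, i.e. a smooth solution in the sense of the paper.

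It remains to globalize, that is, to preclude finite-time blow-up of $\lA f(t)\rA_{H^\mu}$. The plan is an energy estimate: testing $\partial_x^k$ of the equation against $\partial_x^k f$ yields
$$
\tfrac12\tfrac{d}{dt}\lA \partial_x^k f\rA_{L^2}^2 + \nu \lA \partial_x^{k+1} f\rA_{L^2}^2 = \int_\xR \partial_x^k \mathcal{N}_\eps(f)\,\partial_x^k f \dx.
$$
Using the bound $\lA \mathcal{N}_\eps(f)\rA_{H^k} \les C(\eps,\lA f\rA_{\dot W^{1,\infty}})\lA f\rA_{H^{k+1}}$, the right-hand side is controlled by $C\lA f\rA_{H^{k+1}}\lA f\rA_{H^k}$; a Young inequality absorbs the top-order term $\lA \partial_x^{k+1}f\rA_{L^2}^2$ into the viscosity, leaving $\tfrac{d}{dt}\lA f\rA_{H^k}^2 \le C\lA f\rA_{H^k}^2$ with $C$ depending on $\eps$ and on $\sup_{[0,t]}\lA f\rA_{\dot W^{1,\infty}}$. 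The key is therefore to bound the latter independently of $k$ and locally uniformly in time; this is supplied by the maximum principle for $f$ and for its slope $\partial_x f$, which the viscous, truncated equation inherits from the C\'ordoba--Gancedo structure (the viscosity only adds dissipation, and the cutoff preserves the relevant sign). Once $\sup_{[0,T]}\lA f\rA_{\dot W^{1,\infty}}$ is finite, $C=C(\eps)$ is constant and Gr\"onwall forbids blow-up of every $H^\mu$-norm on $[0,T]$, giving the global solution. The main obstacle is precisely this a priori slope bound: because \e{n2} is nonlocal and formally quasilinear (a derivative of $f$ sits inside the integral), it is not covered by off-the-shelf parabolic theory, and the globalization hinges on the maximum principle rather than on energy estimates alone. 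At fixed $\eps$, however, the cutoff and the strictly positive viscosity leave enough room that both this bound and the mapping properties of $\mathcal{N}_\eps$ follow from soft arguments, in contrast with the sharp critical estimates needed later in the paper.
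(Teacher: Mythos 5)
Your first half --- the Duhamel formulation, contraction in $C([0,T];H^\mu)$ for some fixed $\mu>\tdm$, and parabolic bootstrapping to $H^\infty$ --- is sound and is essentially what the paper means by solving \eqref{n2'} by a ``standard iterative scheme''; the mapping properties of $N_\eps$ that you invoke are the content of the estimates \eqref{f3}--\eqref{f4}. The gap is in the globalization, and it is not a technicality: you propose to close the $H^k$ energy estimates by bounding $\sup_t\lA f\rA_{\dot W^{1,\infty}}$ through ``the maximum principle for the slope, which the viscous, truncated equation inherits from the C\'ordoba--Gancedo structure.'' No such maximum principle is available. Even for the untruncated equation \eqref{n1}, the slope is known to obey a maximum principle only under a smallness condition (C\'ordoba--Gancedo require slope less than $1$), the turning results of Castro, C\'ordoba, Fefferman, Gancedo and L\'opez-Fern\'andez show that the slope of a graph solution can become infinite, and if an unconditional slope bound held, most of this paper would be superfluous. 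Moreover the cutoff does \emph{not} ``preserve the relevant sign'': the paper's own Lemma~\ref{L:2.1} shows that the best one can extract at the maximum point of $\partial_x f$ is \eqref{a1}, namely $\fract\lA f\rA_{\dot W^{1,\infty}}\le C_0\lA f\rA_{\dot H^2}^2+C_0\eps^{\beta_0}\lA f\rA_{\dot C^{2,\beta_0}}$, with error terms coming both from the non-signed part of the nonlinearity and from the truncation \eqref{X3}. Controlling these errors (Proposition~\ref{P:2.6}) requires precisely the space--time bound on $\int_0^t\lA f\rA_{\dot H^2}^2$ that your energy estimates are supposed to produce, so your scheme is circular.

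The paper's proof avoids the Lipschitz norm altogether in this proposition. Because $1+(\Delta_\alpha f)^2\ge 1$, the nonlinearity admits the purely Sobolev bounds \eqref{f3}--\eqref{f4}, in particular $\lA N_\eps(f)\rA_{\dot H^1}\lesssim\eps^{1/2}\lA f\rA_{\dot H^{5/2}}+\big(1+\lA f\rA_{H^{3/2}}\big)^2\log\big(2+\lA f\rA_{\dot H^2}^2\big)^{1/2}\lA f\rA_{\dot H^2}$, with no Lipschitz norm anywhere. Testing the equation with $(I-\Delta)^{3/2}f$ as in \eqref{f5}, the supercritical term $\eps^{1/2}\lA f\rA_{\dot H^{5/2}}$ is absorbed by the viscous term since $\eps^{1/2}\ll\la\log\eps\ra^{-1}$, and the remaining terms are absorbed by Young's inequality because $\dot H^2$ sits strictly below the dissipative endpoint $\dot H^{5/2}$; the estimate therefore closes at the $H^{\tdm}$ level with constants depending only on $\eps$ and $\lA f\rA_{H^{\tdm}}$. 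If you wish to keep your $H^k$ hierarchy, the fix is to replace the Lipschitz control by this $H^{\tdm}$ control (higher norms then follow from tame estimates and parabolic smoothing); at fixed $\eps$ the slope plays no role.
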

We postpone the proof of this proposition to \S\ref{S:end}.

\subsection{An estimate of the Lipschitz norm}

\begin{lemma}\label{L:2.1}
For any real number $\beta_0$ in $(0,1/2)$, there exists a positive constant $C_0\geq 1$ such that, for any $\eps\in (0,1]$ 
and any 
smooth solution $f\in C^1([0,T];H^{\infty}(\xR))$ 
of the Muskat equation \eqref{n2},
\be\label{a1}
\fract \lA f(t)\rA_{\dot W^{1,\infty}}\le C_0 \lA f(t)\rA_{\dot H^2}^2+C_0 \varepsilon^{\beta_0}\lA f(t)\rA_{\dot C^{2,\beta_0}},
\ee
where
$$
\lA u\rA_{\dot C^{2,\beta_0}}=\lA \partial_{xx}u\rA_{C^{0,\beta_0}}
=\sup_{\substack{ x,y\in \xR \\ x\neq y}} \frac{\la (\partial_{xx}u)(x)-(\partial_{xx}u)(y)\ra}{\la x-y\ra^{\beta_0}}\cdot
$$
\end{lemma}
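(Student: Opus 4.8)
The plan is to reduce \eqref{a1} to a pointwise inequality at a moving maximum of the slope. Since $f(t,\cdot)\in H^\infty(\xR)$ we have $\|f(t)\|_{\dot W^{1,\infty}}=\|\partial_x f(t)\|_{L^\infty}$, and since $-f$ also solves \eqref{n2} I may assume this norm is attained at a positive maximum, i.e.\ there is $x_t$ with $\partial_x f(t,x_t)=M(t):=\|\partial_x f(t)\|_{L^\infty}$. The function $M$ is Lipschitz in $t$, and by the usual differentiation of a maximum one has, for a.e.\ $t$, $\tfrac{d}{dt}M(t)=\partial_t\partial_x f(t,x_t)$ at a maximizer $x_t$. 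There the first- and second-order conditions read $\partial_x^2 f(x_t)=0$ and $\partial_x^3 f(x_t)\le0$, so that the viscous contribution $|\log\varepsilon|^{-1}\partial_x^3 f(x_t)$ is $\le0$ and can be dropped. Everything then reduces to bounding from above the $x$-derivative of the Muskat nonlinearity at $x_t$.

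Differentiating the right-hand side of \eqref{n2} in $x$ produces, at $x_t$,
\[\mathrm{(I)}=\frac1\pi\int_\xR\frac{\Delta_\alpha\partial_x^2 f}{1+(\Delta_\alpha f)^2}\big(1-\chi(\tfrac\alpha\varepsilon)\big)\dalpha,\qquad \mathrm{(II)}=-\frac2\pi\int_\xR\frac{\Delta_\alpha f\,(\Delta_\alpha\partial_x f)^2}{(1+(\Delta_\alpha f)^2)^2}\big(1-\chi(\tfrac\alpha\varepsilon)\big)\dalpha.\]
I would first rewrite $\mathrm{(I)}$ using the finite-difference identities $\Delta_\alpha\partial_x f+\alpha\,\partial_\alpha\Delta_\alpha\partial_x f=\partial_x^2 f(x-\alpha)$ and $\partial_\alpha\Delta_\alpha f+\Delta_\alpha\partial_x f=\alpha^{-1}(\partial_x f(x)-\Delta_\alpha f)$. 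Because $\partial_x^2 f(x_t)=0$, the first gives $\Delta_\alpha\partial_x^2 f=-\alpha^{-1}\Delta_\alpha\partial_x f-\partial_\alpha\Delta_\alpha\partial_x f$ at $x_t$; substituting and integrating by parts in $\alpha$ (the cut-off annihilates the boundary term at $\alpha=0$, and $\Delta_\alpha\partial_x f\to0$ at infinity) decomposes $\mathrm{(I)}+\mathrm{(II)}$ into a term $A=-\tfrac1\pi\int\tfrac{\Delta_\alpha\partial_x f/\alpha}{1+(\Delta_\alpha f)^2}(1-\chi)\dalpha$, a term $B$ in which $\partial_\alpha$ hits $\chi(\alpha/\varepsilon)$, and a term $C$ that recombines with $\mathrm{(II)}$.

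The term $B$ gives the second term of \eqref{a1}: its kernel $\chi'(\alpha/\varepsilon)/\varepsilon$ is $O(1/\varepsilon)$ and supported on $|\alpha|\sim\varepsilon$, and since $\partial_x^2 f(x_t)=0$ the Hölder bound $|\Delta_\alpha\partial_x f(x_t)|\lesssim\|f\|_{\dot C^{2,\beta_0}}|\alpha|^{\beta_0}$ yields $|B|\lesssim\varepsilon^{\beta_0}\|f\|_{\dot C^{2,\beta_0}}$. The decisive step is to not estimate $A$ by itself but to add it to $C+\mathrm{(II)}$. Writing $d_1(\alpha)=M-\partial_x f(x_t-\alpha)\ge0$ and $d_2(\alpha)=M-\Delta_\alpha f(x_t)\ge0$, so that at $x_t$ one has $\Delta_\alpha\partial_x f=d_1/\alpha$ and $\partial_\alpha\Delta_\alpha f+\Delta_\alpha\partial_x f=d_2/\alpha$, a short computation collapses the sum to
\[A+C+\mathrm{(II)}=-\frac1\pi\int_\xR\frac{d_1}{\alpha^2}\,\frac{1-(\Delta_\alpha f)^2+2M\,\Delta_\alpha f}{(1+(\Delta_\alpha f)^2)^2}\,\big(1-\chi(\tfrac\alpha\varepsilon)\big)\dalpha.\]
The point is that the second defect $d_2$ has cancelled, leaving the single factor $d_1/\alpha^2$ multiplied by a bounded rational function of $\Delta_\alpha f$ and $M$.

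To conclude I would exploit two elementary facts: $\Delta_\alpha f(x_t)\le M$ for every $\alpha$ (an average slope never exceeds the maximal one), and $d_1,d_2\le C|\alpha|^{1/2}\|f\|_{\dot H^2}$ by Cauchy--Schwarz, since each defect is an integral of $\partial_x^2 f$ over $[x_t-|\alpha|,x_t]$. The numerator $1-(\Delta_\alpha f)^2+2M\,\Delta_\alpha f$ is a downward parabola in $\Delta_\alpha f$ that is nonnegative for all $\Delta_\alpha f\le M$ except when $\Delta_\alpha f<-(\sqrt{1+M^2}-M)$; on the favourable set the integrand is $\le0$ and is discarded, while the unfavourable set forces $d_2\ge\sqrt{1+M^2}$ and hence, by $d_2\le C|\alpha|^{1/2}\|f\|_{\dot H^2}$, confines $\alpha$ to $|\alpha|\gtrsim(1+M^2)/\|f\|_{\dot H^2}^2$. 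There I bound the rational factor by $C(1+M)$, use $0\le d_1\le2M$, and integrate $\int_{|\alpha|\gtrsim(1+M^2)/\|f\|_{\dot H^2}^2}\alpha^{-2}\dalpha$ to obtain $\lesssim\frac{M(1+M)}{1+M^2}\|f\|_{\dot H^2}^2\lesssim\|f\|_{\dot H^2}^2$, uniformly in $M$ and $\varepsilon$; together with the bound on $B$ this proves \eqref{a1}. The main obstacle is precisely this recombination: treating $\mathrm{(I)}$ and $\mathrm{(II)}$ separately (or discarding $A$ on its own) leaves a factor $d_2$ that one can only control by $|\alpha|^{1/2}\|f\|_{\dot H^2}$ or by $M$, producing either a spurious $\|f\|_{\dot W^{1,\infty}}^2$ from the far field or a $\log(1/\varepsilon)$ from the near-diagonal region, both of which are fatal; only after summing $A$ with $C+\mathrm{(II)}$, so that $d_2$ cancels and the constraint $\Delta_\alpha f\le M$ becomes usable, is the singular integral controlled by $\|f\|_{\dot H^2}^2$ alone.
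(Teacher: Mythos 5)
Your proof is correct, and while it shares the paper's overall skeleton (evaluate at a spatial maximum $x_t$ of $\partial_x f$, use $\partial_x^2 f(t,x_t)=0$ and $\partial_x^3 f(t,x_t)\le 0$ to kill the local terms and the viscosity, and control the near-diagonal cut-off contribution by $\varepsilon^{\beta_0}\lA f\rA_{\dot C^{2,\beta_0}}$ via the vanishing of $\partial_{xx}f$ at $x_t$), the treatment of the main singular integral is genuinely different. The paper invokes the C\'ordoba--Gancedo identity from \cite{CG-CMP2}, which at $x_t$ reduces the derivative of the nonlinearity to $-\frac{2}{\pi}\int \alpha^{-2}\,d_2\,\frac{1+M\Delta_\alpha f}{1+(\Delta_\alpha f)^2}\,\dalpha$ with $d_2=M-\Delta_\alpha f\ge 0$; it discards the signed piece $-\frac{2}{\pi}\int d_2\,\alpha^{-2}\dalpha\le 0$, bounds the remainder by $\frac{1}{\pi}\int d_2^2\,\alpha^{-2}\dalpha$, and then closes via Hardy's inequality and the Besov embedding $\lA \partial_x f\rA_{\dot B^{1/2}_{\infty,2}}\les \lA f\rA_{\dot H^2}$. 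You instead re-derive an equivalent closed form by integrating by parts in $\alpha$ (your algebra for $A+C+\mathrm{(II)}$ checks out: the bracket $\frac{1}{1+u^2}+\frac{2u(M-u)}{(1+u^2)^2}$ does collapse to $\frac{1-u^2+2Mu}{(1+u^2)^2}$), and then close by a sign/support dichotomy: where the parabola $1-u^2+2Mu$ is nonnegative the integrand has a favourable sign and is discarded, and on the complementary set the forced lower bound $d_2>\sqrt{1+M^2}$ combined with $d_2\les |\alpha|^{1/2}\lA f\rA_{\dot H^2}$ confines $\alpha$ to $|\alpha|\gtrsim (1+M^2)/\lA f\rA_{\dot H^2}^2$, after which the crude bounds $d_1\le 2M$ and $|1-u^2+2Mu|/(1+u^2)^2\les 1+M$ and the tail integral of $\alpha^{-2}$ give $\les \lA f\rA_{\dot H^2}^2$ uniformly in $M$ and $\varepsilon$. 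Your route trades the Hardy/Besov machinery for an elementary pointwise argument; the paper's route is shorter because the key identity is quoted rather than re-derived, and its Besov bound $\int\lA\delta_\alpha \partial_x f\rA_{L^\infty}^2|\alpha|^{-2}\dalpha$ is marginally more precise, but both yield exactly \eqref{a1}.
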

\begin{proof}
The proof is partially based on arguments from~\cite{CG-CMP2,Cameron,Gancedo-Lazar-H2}. 
Firstly, it follows from the proof of \cite[Lemma~$5.1$]{CG-CMP2} that
\begin{align*}
\partial_{x}\frac{1}{\pi}\int_\xR\frac{\partial_x\Delta_\alpha f(x)}{1+\left(\Delta_\alpha f(x)\right)^2}\dalpha&=\frac{\partial_x^2f(t,x)}{2\pi}
\int\left(\frac{1}{1+(\Delta_\alpha f(t,x))^2}-\frac{1}{1+(\Delta_{-\alpha} f(t,x))^2}\right)\frac{\dalpha}{\alpha}\\
&\quad-\frac{2}{\pi}
\int\frac{\partial_x f(t,x)-\Delta_\alpha f(t,x)}{\alpha^2}
\frac{1+\partial_x f(t,x)\Delta_\alpha f(t,x)}{1+(\Delta_\alpha f(t,x))^2} \dalpha.
\end{align*}
Moreover, 
\be\label{X3}
\begin{aligned}
&\left|\partial_{x}\left(\frac{1}{\pi}\int_\xR\frac{\partial_x\Delta_\alpha f}{1+\left(\Delta_\alpha f\right)^2}\chi\left(\frac{|\alpha|}{\varepsilon}\right)\dalpha\right)\right|\\
&\qquad\qquad\lesssim \int_{|\alpha|\leq 2 \varepsilon}\left(|\Delta_\alpha f_{xx}|
+|\Delta_\alpha f_{x}|^2\right)\dalpha \\&
\qquad\qquad\lesssim \int_\xR |\Delta_\alpha f_{x}|^2\dalpha+\varepsilon^{\beta_0}\lA f_{xx}\rA_{\dot C^{0,\beta_0}},
\end{aligned}
\ee
where we used the notations $f_x=\partial_x f$ and $f_{xx}=\partial_{xx}f$. 
Thus, for any $t$ and any $x$, we have
\be\label{n10}
\begin{aligned}
&(\partial_x \partial_t f)(t,x) -|\log(\varepsilon)|^{-1}\partial_{x}^2 f_x(t,x)
\\
&\qquad\qquad\leq \frac{\partial_x^2f(t,x)}{2\pi}
\int\left(\frac{1}{1+(\Delta_\alpha f(t,x))^2}-\frac{1}{1+(\Delta_{-\alpha} f(t,x))^2}\right)\frac{\dalpha}{\alpha}\\
&\qquad\qquad\quad-\frac{2}{\pi}
\int\frac{\partial_x f(t,x)-\Delta_\alpha f(t,x)}{\alpha^2}
\frac{1+\partial_x f(t,x)\Delta_\alpha f(t,x)}{1+(\Delta_\alpha f(t,x))^2} \dalpha\\
&\qquad\qquad\quad +C  \int |\Delta_\alpha f_{x}(t,x)|^2\dalpha+C\varepsilon^{\beta_0}\lA f_{xx}(t)\rA_{\dot C^{0,\beta_0}}.
\end{aligned}
\ee
Consider the function $\varphi(t)=\lA \partial_{x}f(t)\rA_{L^\infty}$ and 
a function $ t\mapsto x_t$ 
such that
$$
\lA \partial_{x}f(t)\rA_{L^\infty}=(\partial_{x} f)(t,x_t).
$$
Then $(\partial_{x}^2f)(t,x_t)=0$ and $-(\partial_{xx} f_x)(t,x_t)\geq 0$. So, 
it follows 
from \e{n10} that
\begin{align*}
\dot{\varphi}(t)&\leq -\frac{2}{\pi}\int
\frac{\partial_x f(t,x_t)-\Delta_\alpha f(t,x_t)}{\alpha^2}\dalpha\\&
\quad-\frac{2}{\pi}\int
\frac{(\partial_x f(t,x_t)-\Delta_\alpha f(t,x_t))^2}{\alpha^2}
\frac{\Delta_\alpha f(t,x_t)}{1+(\Delta_\alpha f(t,x_t))^2} \dalpha\\& \quad+C  \int |\Delta_\alpha f_{x}(t,x_t)|^2\dalpha+C\varepsilon^{\beta_0}\lA f_{xx}(t)\rA_{\dot C^{0,\beta_0}}.
\end{align*}
As already observed in~\cite{CG-CMP2} (see also~\cite{Cameron,Gancedo-Lazar-H2}), 
the first term in the right-hand side has a sign 
since $\partial_x f(t,x_t)\ge 
\Delta_\alpha f(t,x_t)$ for any $\alpha$. 
It follows that
\begin{align*}
\dot{\varphi}(t)&\leq \frac{1}{\pi}\int \frac{(\partial_x f(t,x_t)
-\Delta_\alpha f(t,x_t))^2}{\alpha^2} \dalpha+C  \int |\Delta_\alpha f_{x}(t,x_t)|^2\dalpha\\
&\quad+C\varepsilon^{\beta_0}\lA f_{xx}(t)\rA_{\dot C^{0,\beta_0}}.
\end{align*}
We now apply Hardy's inequality to infer that
$$
\int \frac{(\partial_x f(t,x_t)
-\Delta_\alpha f(t,x_t))^2}{\alpha^2} \dalpha\les \int |\Delta_\alpha f_{x}(t,x_t)|^2\dalpha.
$$
Consequently, we end up with
\begin{align*}
\dot{\varphi}(t)\lesssim 
\int \lA\Delta_\alpha f_{x}(t)\rA_{L^\infty}^2\dalpha +\varepsilon^{\beta_0}\lA f_{xx}(t)\rA_{\dot C^{0,\beta_0}}.
\end{align*}
Introducing the difference 
operator $\delta_\alpha g(x)=g(x)-g(x-\alpha)$, the previous inequality 
is better formulated as follows:
\begin{align*}
\dot{\varphi}(t)\lesssim 
\int \lA \delta_\alpha (\partial_xf)(t)\rA_{L^\infty}^2
 \frac{\dalpha}{|\alpha|^{1+\mez 2}}+\varepsilon^{\beta_0}\lA f_{xx}(t)\rA_{\dot C^{0,\beta_0}}.
\end{align*}
Now the right-hand side is equivalent to 
the following homogeneous Besov norm:
$\lA \partial_x f(t)\rA_{\dot{B}^{\mez}_{\infty,2}}^2$ 
(see~\cite{Triebel-TFS,Triebel1988} or Section~$2$ in \cite{AN1}). 
Then it follows from Sobolev embeddings that
$$
\dot{\varphi}(t)\lesssim \lA f(t)\rA_{\dot H^2}^2+\varepsilon^{\beta_0}\lA f_{xx}(t)\rA_{\dot C^{0,\beta_0}}
$$
which is the wanted result.
\end{proof}

\subsection{Sobolev estimates}
In this paragraph we recall a generalized Sobolev 
energy estimate proved in our companion paper~\cite{AN1}. 
By generalized Sobolev energy estimate, we mean that, 
instead of estimating the $L^\infty_t(L^2_x)$-norm 
of $(-\Delta)^s f$, we shall estimate 
the $L^\infty_t(L^2_x)$-norm of $\D^{s,\phi}f$ for some function $\phi$ 
satisfying the assumptions in Definition~\ref{defi:D}. 

There two technical results 
that we will borrow from~\cite{AN1}. The first result, which is Lemma~$3.4$ in \cite{AN1}, 
gives an energy estimate.

\begin{lemma}\label{L:3.4}
There exists a positive constant $C$ such that, for any $T>0$ and 
any smooth solution $f\in C^{1}([0,T];H^{\infty}(\xR))$ to~\e{n1}, there holds 
\begin{multline}\label{Z21}
\fract \blA \D^{\tdm,\phi}f\brA_{L^2}^2
+ \int_\xR \frac{\bla\D^{2,\phi}f\bra^2}{1+(\partial_x f)^2}\dx+|\log(\varepsilon)|^{-1}\int_\xR \bla\D^{\frac{5}{2},\phi}f\bra^2\dx\\
\le C Q(f) \blA\D^{2,\phi}f\brA_{L^2},
\end{multline}
where
\begin{align*}
Q(f)&= \left(\lA f\rA_{\dot H^2}+\lA f\rA_{\dot H^{\frac{7}{4}}}^2\right) 
\blA\D^{\tdm,\phi}f\brA_{L^2}
+\blA\D^{\frac74,\phi}f\brA_{L^2}
\lA f\rA_{{H}^{\frac74}}\\
&\quad+\left(\lA f\rA_{H^{\frac{19}{12}}}^{3/2}+\lA f\rA_{\dot H^{\frac74}}^{1/2}\right) \blA\D^{\frac{7}{4},\phi^{2}}f\brA^{1/2}_{L^2}
\lA f\rA_{\dot H^{\frac74}}.
\end{align*} 
\end{lemma}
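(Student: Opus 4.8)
The plan is to prove \eqref{Z21} as a direct weighted energy estimate, exploiting that $\D^{\tdm,\phi}$ is a real, even, self-adjoint Fourier multiplier. Since $(\D^{\tdm,\phi})^2=\D^{3,\phi^2}$ (both have symbol $|\xi|^3\phi(|\xi|)^2$), differentiating the energy gives
\begin{equation*}
\mez\fract\blA\D^{\tdm,\phi}f\brA_{L^2}^2=\big\langle \D^{3,\phi^2}f,\partial_t f\big\rangle_{L^2},
\end{equation*}
and I would substitute the equation for $\partial_t f$. The second-order diffusion term contributes, after one integration by parts in Fourier, exactly $-|\log(\varepsilon)|^{-1}\blA\D^{\frac52,\phi}f\brA_{L^2}^2$, which I move to the left-hand side to produce the viscous dissipation recorded in \eqref{Z21}. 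This reduces everything to controlling the single pairing $\langle \mathcal N(f),\D^{3,\phi^2}f\rangle_{L^2}$, where $\mathcal N(f)=\frac1\pi\int_\xR\frac{\partial_x\Delta_\alpha f}{1+(\Delta_\alpha f)^2}\dalpha$ is the Muskat nonlinearity and $\partial_x\Delta_\alpha f=\Delta_\alpha\partial_x f$.

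The heart of the argument is to extract the parabolic dissipation from this pairing. At the linear level $\mathcal N$ reduces to $-|D|$, so with the variable coefficient restored one expects the principal contribution to behave like $-\frac{1}{1+(\partial_x f)^2}|D|\partial_x f$. I would make this precise by freezing the kernel weight $1/(1+(\Delta_\alpha f)^2)$ at $\alpha=0$, i.e.\ replacing it by $b\defn 1/(1+(\partial_x f)^2)$, and symmetrizing the $\alpha$-integral under $\alpha\mapsto-\alpha$. Pairing the resulting frozen, symmetrized term against $\D^{3,\phi^2}f$ and using that $b$ is bounded above and below then yields exactly the coercive quantity $\int_\xR\frac{\bla\D^{2,\phi}f\bra^2}{1+(\partial_x f)^2}\dx$, which I again move to the left-hand side. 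What remains is a remainder $R$ collecting (i) the error of freezing the coefficient, (ii) the discrepancy between $\Delta_\alpha f$ and $\partial_x f$ in the kernel, and (iii) the commutators produced when $b$ is carried through $\D^{3,\phi^2}$; the whole task is then to show $|R|\les Q(f)\blA\D^{2,\phi}f\brA_{L^2}$.

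To estimate $R$, the guiding principle is to arrange every remainder so that it carries exactly one factor of the top-order quantity $\blA\D^{2,\phi}f\brA_{L^2}$ — the square root of the dissipation — with the remaining factors bundled into the lower-order norms that make up $Q(f)$. Concretely, I would paralinearize $\mathcal N(f)$, bound the resulting paraproducts and commutators by Coifman--Meyer and commutator estimates, and use the finite-difference/Besov machinery already invoked in Lemma~\ref{L:2.1} to turn the $\alpha$-integrals into fractional Sobolev norms. The weight $\phi$ is threaded through the products with the help of the doubling property $(\rm{H}\ref{H2})$ and the slow-variation property $(\rm{H}\ref{H3})$, which permit commuting $\phi(\la D\ra)$ past a product at the cost of lower-order terms and which account for the appearance of the $\phi^2$-weighted norm $\blA\D^{\frac74,\phi^2}f\brA_{L^2}$ in $Q(f)$. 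The precise fractional exponents $\frac74$ and $\frac{19}{12}$ then drop out of interpolating these remainders between $L^2$ and the dissipation level so as to make them genuinely subcritical.

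The main obstacle is exactly this last point. Because we are working at critical regularity, there is no margin: each remainder must lose strictly less than the half-derivative of smoothing supplied by the dissipation, or the estimate will not close. The delicate step is therefore the commutator analysis that carries the multiplier $\phi(\la D\ra)$ through the variable, nonlocal, finite-difference nonlinearity without ever generating a term scaling like the energy itself. Properties $(\rm{H}\ref{H2})$--$(\rm{H}\ref{H3})$ are precisely what make this possible, and verifying the attendant product and commutator bounds with the sharp interpolation exponents is where essentially all of the technical effort lies.
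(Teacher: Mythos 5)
You should first be aware that this paper does not prove Lemma~\ref{L:3.4} at all: it is imported from the companion paper \cite{AN1} (where it is Lemma~3.4), and the only argument supplied here is the surrounding remark that the extra viscous term $-|\log\eps|^{-1}\partial_x^2 f$ and the cut-off $1-\chi(\alpha/\eps)$ in \eqref{n2} do not disturb the $L^2$-energy computation. (Note also that the statement as reproduced says ``solution to~\eqref{n1}'' while the left-hand side of \eqref{Z21} contains the $|\log\eps|^{-1}$ dissipation, which only makes sense for \eqref{n2}; you correctly treat the \eqref{n2} version.) So there is no in-paper proof for your sketch to match, and the relevant comparison is with the argument of \cite{AN1}.

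Measured against what the lemma actually asserts, your proposal has a genuine gap: it is a roadmap, not a proof. The entire content of \eqref{Z21} is the specific form of $Q(f)$ --- that every remainder can be written as (lower-order norms) times one factor of $\blA\D^{2,\phi}f\brA_{L^2}$, with exactly the exponents $\tfrac74$ and $\tfrac{19}{12}$, the powers $\tfrac12$ and $\tfrac32$, and the $\phi^2$-weighted norm $\blA\D^{\frac74,\phi^{2}}f\brA_{L^2}^{1/2}$ --- and your text asserts that these ``drop out of interpolating'' without exhibiting a single one of the required product or commutator bounds. As you yourself observe, at critical regularity there is no margin, so whether the scheme closes is precisely the question; invoking ``Coifman--Meyer and commutator estimates'' does not answer it. Two concrete soft spots: (i) the claim that pairing the frozen-coefficient term against $\D^{3,\phi^2}f$ yields \emph{exactly} the coercive integral $\int_\xR \frac{\bla\D^{2,\phi}f\bra^2}{1+(\partial_x f)^2}\dx$ requires commuting $b=1/(1+(\partial_xf)^2)$ through half of the multiplier $\D^{3,\phi^2}$, and those commutators are themselves of top order $\blA\D^{2,\phi}f\brA_{L^2}^2$ unless one exploits additional structure; the route actually used in this line of work is the decomposition $\mathcal T(f)g=\frac{(\partial_xf)^2}{1+(\partial_xf)^2}\D g+V(f)\partial_xg+R(f,g)$ of \eqref{n1T} together with the null-type bound \eqref{XH}, rather than a naive freezing of the kernel; (ii) the passage of $\phi(\la D\ra)$ through the nonlinearity is where the $\phi^2$ norm originates, and it needs the quantitative slow-growth information of $(\rm{H}\ref{H3})$ (comparison with $\log(4+r)$), not merely the doubling property $(\rm{H}\ref{H2})$. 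To turn your outline into a proof you would have to carry out these estimates term by term, which is precisely the content of \cite[Lemma~3.4]{AN1}.
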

\begin{remark}
Some explanations are in order since 
the reader may notice several modifications compared to our paper~\cite{AN1}. Firstly, 
in \cite{AN1} we considered 
a function $\phi$ whose definition depends on an extra function $\kappa$. 
Here we ignore this point 
since it is irrelevant for the present analysis. Indeed, 
the functions $\phi$ and $\kappa$ are 
shown in \cite{AN1} to be equivalent (such that $c\kappa(\lam)\le \phi(\lam)\le C \kappa(\lam)$), and the distinction between 
them served only to organize the proof. 
Secondly, in~\cite{AN1} we also assume that $\phi(r)$ is bounded from below by $(\log(4+ r))^a$ for some $a\ge 0$. 
Here we will use that this property holds with~$a=0$. 
Once the previous clarifications have been done, 
it remains to explain that in \cite{AN1} we consider the equation~\e{n1} 
while here we work with~\e{n2}. The elliptic term $(-\partial_{x}^2)$ is trivial to handle since 
in \cite{AN1} we only applied an $L^2$-energy estimate and since the latter operator is positive. 
Eventually, the cut-off function $(1-\chi(\alpha/\eps))$ is also harmless 
in the various computations used to prove Lemma~$3.4$ in~\cite{AN1}. 
\end{remark}

Secondly, we recall two interpolation inequalities from \cite[Lemma~$3.5$]{AN1}. 
Hereafter, we use the notations
\be\label{n67}
\begin{aligned}
A_\phi(t)&=\blA \D^{\tdm,\phi}f(t)\brA_{L^2}^2,\\
B_\phi(t)&=\blA \D^{2,\phi}f(t)\brA_{L^2}^2,
 \\
P_\phi(t)&=\blA \D^{\frac{5}{2},\phi}f(t)\brA_{L^2}^2,
\end{aligned}
\ee
and 
$$
\mu_\phi(t)=\left(\phi\left(\frac{B(t)}{A(t)}\right)\right)^{-1} .
$$
\begin{lemma}\label{L:3.5}
Consider a real number $7/4\le s\leq 2$. Then, 
there exists a positive constant $C$ such that, for any $T>0$, 
any smooth solution $f\in C^{1}([0,T];H^{\infty}(\xR))$ to~\e{n2} and any $t\in [0,T]$,
\begin{align}
&\lA f(t)\rA_{\dot H^s}\le C\mu_\phi(t) A_\phi(t)^{2-s}  B_\phi(t)^{s-\tdm},\label{Z20'}\\
&\blA \D^{\frac{7}{4},\phi^{2}}f(t)\brA_{L^2}\leq C \mu_\phi(t)A_\phi(t)^{\frac{1}{4}}B_\phi(t)^{\frac{1}{4}}.\label{n110}
\end{align}
\end{lemma}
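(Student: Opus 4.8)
The plan is to transfer both estimates to the Fourier side, where they become weighted interpolation inequalities whose only genuine difficulty is the slow variation of $\phi$; since everything is at fixed $t$, I treat $f=f(t)$ as a fixed Schwartz function. First I would set $u\defn\D^{\tdm,\phi}f$, so that $\hat u(\xi)=\la\xi\ra^{\tdm}\phi(\la\xi\ra)\hat f(\xi)$ and
\[
A_\phi=\blA u\brA_{L^2}^2,\qquad B_\phi=\blA\D^{\mez}u\brA_{L^2}^2 .
\]
Solving for $\hat f$ and inserting gives the identities
\[
\lA f\rA_{\dot H^s}^2=\int_\xR \la\xi\ra^{2s-3}\phi(\la\xi\ra)^{-2}\,\bla\hat u(\xi)\bra^2\dxi,
\]
and, using the symbol identity $\la\xi\ra^{7/4}\phi^2=\la\xi\ra^{1/4}\phi\cdot\la\xi\ra^{\tdm}\phi$,
\[
\blA\D^{\frac74,\phi^2}f\brA_{L^2}^2=\int_\xR \la\xi\ra^{1/2}\phi(\la\xi\ra)^{2}\,\bla\hat u(\xi)\bra^2\dxi .
\]
Both inequalities are thus of the form $\int \la\xi\ra^{\theta}\phi(\la\xi\ra)^{2\sigma}\bla\hat u\bra^2\dxi$, to be bounded by $A_\phi$ and $B_\phi$, with $(\theta,\sigma)=(2s-3,-1)$ for \eqref{Z20'} and $(\theta,\sigma)=(\tfrac12,+1)$ for \eqref{n110}; since $7/4\le s\le 2$, in both cases $\theta\in[\tfrac12,1]$.

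The second step is a Littlewood--Paley decomposition $u=\sum_j\Delta_j u$. With $a_j=\blA\Delta_j u\brA_{L^2}^2$ one has $A_\phi=\sum_j a_j$ and $B_\phi=\sum_j 2^j a_j$, and the target becomes $\sum_j 2^{j\theta}\phi(2^j)^{2\sigma}a_j$. Ignoring $\phi$, the bare interpolation is just Hölder in $j$: since $0\le\theta\le1$,
\[
\sum_j 2^{j\theta}a_j\le A_\phi^{\,1-\theta}B_\phi^{\,\theta},
\]
and $(1-\theta,\theta)=(4-2s,\,2s-3)$ (resp. $(\tfrac12,\tfrac12)$) reproduces exactly the homogeneity of \eqref{Z20'} and \eqref{n110}. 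The whole content of the lemma is therefore to replace the weight $\phi(2^j)^{2\sigma}$ by its value $\phi(\Lambda)^{2\sigma}=\mu_\phi^{-2\sigma}$ frozen at the critical frequency $\Lambda\defn B/A\sim B_\phi/A_\phi$. Accordingly I would split the sum at the index $j_*$ with $2^{j_*}\sim\Lambda$, treat the $O(1)$ octaves around $j_*$ with the doubling assumption $(\rm{H}\ref{H2})$, and estimate the two tails separately.

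The tails carry the real work and are the main obstacle. For \eqref{Z20'} the weight $\phi^{-2}$ is decreasing, so the dangerous tail is the low-frequency one $j<j_*$, on which $\phi(2^j)^{-2}\ge\phi(\Lambda)^{-2}$; for \eqref{n110} the weight $\phi^{2}$ is increasing and the dangerous tail is $j>j_*$. In each case I would bound the ratio $\phi(2^j)/\phi(\Lambda)$ by the corresponding ratio of logarithms $\log(4+2^j)/\log(4+\Lambda)$, which is precisely assumption $(\rm{H}\ref{H3})$, and then sum against the geometric factor $2^{(j-j_*)\theta}$ supplied by the bare interpolation. The structural reason this closes is that $\Lambda=B/A$ is the mass-weighted mean frequency of $u$, so the coefficients $a_j$ cannot concentrate far from $j_*$ without moving $\Lambda$; quantitatively, the restriction $s\ge 7/4$, i.e. $\theta\ge\tfrac12$, is exactly what makes the logarithmically weighted tail series converge, and the inequality is sharp (in the single-frequency limit) precisely when all the spectral mass of $u$ sits at the scale $\Lambda$. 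Summing the tails produces the frozen weight $\phi(\Lambda)^{2\sigma}$, which combined with the bare interpolation yields \eqref{Z20'} and \eqref{n110}.
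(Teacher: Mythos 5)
The paper itself does not prove Lemma~\ref{L:3.5}: it is recalled verbatim from Lemma~3.5 of the companion paper \cite{AN1}, so the only in-paper ``proof'' is that citation. Your reduction and strategy are nonetheless exactly the natural (and, in substance, the companion paper's) argument: pass to $u=|D|^{3/2,\phi}f$, observe that the bare exponents are forced by H\"older in the dyadic decomposition, split at the mean frequency $\Lambda=B_\phi/A_\phi$, use the monotonicity of $\phi$ on the harmless side and assumption $(\mathrm{H}3)$ on the dangerous side, where the at-most-logarithmic growth of $\phi$ is beaten by geometric decay. For \eqref{Z20'} this is correct and complete in outline. Two quibbles: the ``geometric factor supplied by the bare interpolation'' is really supplied by the trivial bounds $a_j\le A_\phi$, resp.\ $2^j a_j\le B_\phi$, applied blockwise; and the restriction $s\ge 7/4$ is not what makes the logarithmically weighted tail converge (any $\theta>0$ would do) --- it only keeps the constant uniform in $s$.

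There is, however, a genuine inconsistency in your treatment of \eqref{n110}. Your own bookkeeping gives the frozen weight $\phi(\Lambda)^{2\sigma}=\mu_\phi^{-2\sigma}$ with $\sigma=+1$, so the argument you describe proves $\bigl\Vert |D|^{7/4,\phi^2}f\bigr\Vert_{L^2}\le C\,\mu_\phi^{-1}A_\phi^{1/4}B_\phi^{1/4}$, which is \emph{not} \eqref{n110} as printed (that has $\mu_\phi^{+1}$, and $\mu_\phi\le 1$ since $\phi\ge 1$); the two differ by a factor $\phi(\Lambda)^{2}$. In fact the printed inequality cannot hold: for $f$ with Fourier support in a single annulus $\{|\xi|\sim\Lambda_0\}$ one computes $\bigl\Vert |D|^{7/4,\phi^2}f\bigr\Vert_{L^2}\sim\phi(\Lambda_0)\,A_\phi^{1/4}B_\phi^{1/4}$ while $\mu_\phi A_\phi^{1/4}B_\phi^{1/4}\sim\phi(\Lambda_0)^{-1}A_\phi^{1/4}B_\phi^{1/4}$, so \eqref{n110} would force $\phi$ to be bounded, contradicting $(\mathrm{H}1)$. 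Your method therefore proves the correct version of the estimate (with $\mu_\phi^{-1}$, which is also all that is needed to deduce Proposition~\ref{P:3.3}, since there one only uses $\mu_\phi\le1$ elsewhere), but your closing sentence asserts that the frozen weight ``yields \eqref{n110}'' as stated. You should have flagged the sign discrepancy instead of passing over it: as written, the last step of your argument does not follow from the computation that precedes it.
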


From these two lemmas, we get at once the following

\begin{proposition}\label{P:3.3}
There exist two positive constants $C_1$ and $C_2$ such that, 
for any $T>0$ and any smooth solution $f\in C^1([0,T];H^\infty(\xR))$ of the Muskat equation~\e{n2},
\begin{multline}\label{a2}
\fract A_\phi(t)+C_1\frac{B_\phi(t)}{1+\lA f_x(t)\rA_{L^\infty}^2}+|\log(\varepsilon)|^{-1} P_\phi(t)\\
\leq C_2 \left( \sqrt{A_\phi(t)}+A_\phi(t) \right)\mu_\phi(t) B_\phi(t).
\end{multline}
\end{proposition}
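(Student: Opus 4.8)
The plan is to derive \e{a2} directly from the energy identity \e{Z21} of Lemma~\ref{L:3.4} and the interpolation inequalities \e{Z20'}--\e{n110} of Lemma~\ref{L:3.5}; the only substantive work is to bound the nonlinear factor $CQ(f)\blA\D^{2,\phi}f\brA_{L^2}=CQ(f)\sqrt{B_\phi(t)}$ on the right of \e{Z21} by the right-hand side of \e{a2}. First I would record the two structural facts that drive the whole computation. On the left of \e{Z21}, the dissipative term is bounded below by
\[
\int_\xR\frac{\bla\D^{2,\phi}f\bra^2}{1+(\partial_x f)^2}\dx\ge \frac{1}{1+\lA f_x\rA_{L^\infty}^2}\int_\xR\bla\D^{2,\phi}f\bra^2\dx=\frac{B_\phi(t)}{1+\lA f_x\rA_{L^\infty}^2},
\]
which already produces the second term on the left of \e{a2} with $C_1=1$, while the viscous term $|\log(\eps)|^{-1}P_\phi$ carries over verbatim. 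The second fact is that $\phi\ge 1$ forces $\mu_\phi(t)\le 1$, which I will use at the end to collapse any power $\mu_\phi^{k}$ with $k\ge 1$ into a single factor $\mu_\phi$.

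Next I would estimate $Q(f)\sqrt{B_\phi}$ term by term, replacing each Sobolev norm of $f$ using \e{Z20'}--\e{n110}. Writing $a=A_\phi(t)$, $b=B_\phi(t)$ and $m=\mu_\phi(t)$, inequality \e{Z20'} gives $\lA f\rA_{\dot H^{2}}\le C m\, b^{1/2}$ (at $s=2$) and $\lA f\rA_{\dot H^{7/4}}\le C m\, a^{1/4}b^{1/4}$ (at $s=7/4$), \e{n110} gives $\blA\D^{7/4,\phi^{2}}f\brA_{L^2}\le C m\, a^{1/4}b^{1/4}$, and I would note the elementary bound $\blA\D^{7/4,\phi}f\brA_{L^2}\le a^{1/4}b^{1/4}$, obtained by Cauchy--Schwarz in frequency from $|\xi|^{7/4}\phi=(|\xi|^{3/2}\phi)^{1/2}(|\xi|^{2}\phi)^{1/2}$, together with $\blA\D^{3/2,\phi}f\brA_{L^2}=a^{1/2}$. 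Multiplying out, the first group of $Q(f)$ contributes $(C m\, b^{1/2}+C m^{2}a^{1/2}b^{1/2})\,a^{1/2}b^{1/2}\le C m(a^{1/2}+a)\,b$, the second contributes $a^{1/4}b^{1/4}\cdot C m\, a^{1/4}b^{1/4}\cdot b^{1/2}=C m\, a^{1/2}b$, and the third contributes $C m^{2}(a^{1/2}+a)\,b$. Using $m\le 1$ throughout to replace $m^{2}$ by $m$ and collecting the powers $a^{1/2}$ and $a$, this yields $CQ(f)\sqrt{B_\phi}\le C_2(\sqrt{A_\phi}+A_\phi)\,\mu_\phi B_\phi$, which is precisely the right-hand side of \e{a2}.

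The point that requires the most care, and the reason the estimate closes at all, is the exponent bookkeeping: after interpolation each monomial must carry the total weight $b^{1}=B_\phi$ and a weight $a^{\theta}=A_\phi^{\theta}$ with $\theta\in\{1/2,1\}$, and at least one factor $m=\mu_\phi$. This balance is not accidental but is forced by the scaling invariance \e{acritical} of the Muskat equation at the critical exponent $\dot H^{3/2}$, so that the homogeneity of every term must match that of the dissipative $B_\phi$ on the left. I would verify it explicitly on the third, most delicate group, where the half-powers $\blA\D^{7/4,\phi^{2}}f\brA_{L^2}^{1/2}$ and $\lA f\rA_{\dot H^{7/4}}^{1/2}$ and the lower-order norm $\lA f\rA_{H^{19/12}}$ (which I interpolate between $\dot H^{3/2}$ and $\dot H^{7/4}$, using $\lA f\rA_{\dot H^{3/2}}\le a^{1/2}$) combine to give exactly $m^{2}a^{1/2}b$ and $m^{2}ab$. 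A secondary, purely technical point is that $Q(f)$ also contains the inhomogeneous norms $\lA f\rA_{H^{7/4}}$ and $\lA f\rA_{H^{19/12}}$; these I would bound by the corresponding homogeneous interpolation inequalities of Lemma~\ref{L:3.5}, which is precisely the reduction needed for the bookkeeping above to apply uniformly.
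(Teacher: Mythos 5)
Your proposal takes exactly the route the paper does: the paper derives Proposition~\ref{P:3.3} ``at once'' from Lemma~\ref{L:3.4} and Lemma~\ref{L:3.5} without writing out the computation, and your term-by-term bookkeeping is the correct way to fill it in. The lower bound on the dissipative term, the Cauchy--Schwarz bound $\blA\D^{\frac74,\phi}f\brA_{L^2}\le A_\phi^{1/4}B_\phi^{1/4}$, the interpolation of $\dot H^{19/12}$ between $\dot H^{3/2}$ and $\dot H^{7/4}$ (with exponents $2/3$ and $1/3$), and the final collapse of $\mu_\phi^2$ to $\mu_\phi$ via $\phi\ge 1$ all check out, and every monomial does land on $\mu_\phi(\sqrt{A_\phi}+A_\phi)B_\phi$ as you claim.

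The one step that does not work as written is your disposal of the inhomogeneous norms: you cannot ``bound $\lA f\rA_{H^{7/4}}$ and $\lA f\rA_{H^{19/12}}$ by the corresponding homogeneous interpolation inequalities,'' because $\lA f\rA_{H^{s}}\ge \lA f\rA_{\dot H^{s}}$ — the inequality points the wrong way — and the extra $\lA f\rA_{L^2}$ component is controlled by neither $A_\phi$ nor $B_\phi$. Concretely, the $L^2$ part of the second group of $Q(f)$ produces a term of size $\lA f\rA_{L^2}A_\phi^{1/4}B_\phi^{3/4}$, which is not of the form $C(\sqrt{A_\phi}+A_\phi)\mu_\phi B_\phi$. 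To be fair, this loose end originates in the paper itself (Lemma~\ref{L:3.4}, quoted from \cite{AN1}, carries inhomogeneous norms while \eqref{a2} has absolute constants); the intended reading is evidently that those norms are homogeneous, or else the $L^2$-contributions must be absorbed separately (e.g.\ by Young's inequality into the dissipative term, at the price of a constant depending on $\sup_t\lA f(t)\rA_{L^2}$, which Lemma~\ref{L:L2} controls). You correctly identified the issue but resolved it with a false reduction; apart from that the argument is complete and matches the paper's.
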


We will also need an estimate for the $L^2$-norm. 
\begin{lemma}\label{L:L2}
There holds
$$
\frac{1}{2}\fract \lA f(t)\rA_{L^2}^2\le C \varepsilon^{\mez}\lA f\rA_{\dot{H}^{\tdm}}\lA f\rA_{L^2}.
$$
In particular,
\begin{equation}\label{X3deux}
 \lA f(t)\rA_{L^2}\le  \lA f_0\rA_{L^2} +C\eps^{\mez}\int_0^t\lA f(\tau)\rA_{\dot{H}^{\tdm}}\dtau.
\end{equation}
\end{lemma}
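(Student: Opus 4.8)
The plan is to test the equation~\eqref{n2} against $f$ itself and to exploit the $L^2$ maximum principle for the genuine Muskat nonlinearity, so that only the small-scale part that has been removed by the cut-off needs to be estimated.

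First I would compute
\[
\frac12\fract\lA f(t)\rA_{L^2}^2
=-|\log(\eps)|^{-1}\lA\partial_x f\rA_{L^2}^2
+\int_\xR f\,\mathcal{N}_\eps(f)\dx,
\]
where $\mathcal N_\eps(f)$ denotes the right-hand side of~\eqref{n2}. The viscous term is $\le 0$ and is discarded. Writing $\mathcal N(f)$ for the full nonlinearity (the right-hand side of~\eqref{n1}, i.e.\ without the cut-off), the weights $1-\chi(\alpha/\eps)$ and $1$ differ by $-\chi(\alpha/\eps)$, which gives the decomposition
\[
\int_\xR f\,\mathcal N_\eps(f)\dx
=\int_\xR f\,\mathcal N(f)\dx
-\frac1\pi\int_\xR\chi\Big(\frac\alpha\eps\Big)\bigg(\int_\xR f\,\frac{\partial_x\Delta_\alpha f}{1+(\Delta_\alpha f)^2}\dx\bigg)\dalpha .
\]
The key structural input is that $\int_\xR f\,\mathcal N(f)\dx\le 0$. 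This is the $L^2$ maximum principle for the Muskat equation~\cite{CCG-Annals,CG-CMP}, which I would use as a functional inequality valid for every smooth $f$ (it is the statement $\fract\lA f\rA_{L^2}\le 0$ along genuine solutions, read at the initial time with $f$ as data), so it applies to our solution of~\eqref{n2} at each fixed $t$. This reduces the whole estimate to the cut-off correction, which is now supported in $|\alpha|\le 2\eps$; this localization is exactly what produces the gain in $\eps$.

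It remains to bound the correction, and here the nonlinear denominator is harmless. Bounding $1/(1+(\Delta_\alpha f)^2)\le 1$ pointwise and using Cauchy--Schwarz gives
\[
\Big|\int_\xR f\,\frac{\partial_x\Delta_\alpha f}{1+(\Delta_\alpha f)^2}\dx\Big|
\le\lA f\rA_{L^2}\,\lA\partial_x\Delta_\alpha f\rA_{L^2}.
\]
Since $\partial_x\Delta_\alpha f=\Delta_\alpha(\partial_x f)$, the elementary finite-difference bound $\lA\Delta_\alpha g\rA_{L^2}\le C|\alpha|^{-\mez}\lA g\rA_{\dot H^{\mez}}$ (immediate from $|1-e^{-i\alpha\xi}|^2\le 2|\alpha\xi|$ and Plancherel), applied to $g=\partial_x f$, yields $\lA\partial_x\Delta_\alpha f\rA_{L^2}\le C|\alpha|^{-\mez}\lA f\rA_{\dot H^{\tdm}}$. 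Inserting this and using $\int_\xR\chi(\alpha/\eps)|\alpha|^{-\mez}\dalpha=C\eps^{\mez}$ (scaling $\alpha=\eps\beta$) produces the claimed bound $C\eps^{\mez}\lA f\rA_{\dot H^{\tdm}}\lA f\rA_{L^2}$. The second assertion then follows by dividing by $\lA f\rA_{L^2}$ and integrating in time, together with $\lA f_0\star\chi_\eps\rA_{L^2}\le\lA f_0\rA_{L^2}$.

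The main obstacle is conceptual rather than computational. A direct estimate of $\int_\xR f\,\mathcal N_\eps(f)\dx$ by absolute values diverges: for large $\alpha$ the kernel is only conditionally convergent (the linearized operator is morally $-|D|$), so the same per-$\alpha$ Cauchy--Schwarz would replace the convergent $\int\chi(\alpha/\eps)|\alpha|^{-\mez}\dalpha$ by the non-integrable $\int(1-\chi)|\alpha|^{-\mez}\dalpha$. One cannot avoid exploiting the good sign of the full nonlinearity; it is precisely the subtraction of $\mathcal N(f)$ that confines the remaining integral to the band $|\alpha|\lesssim\eps$, where the $\dot H^{3/2}$ finite-difference estimate is summable and delivers the factor $\eps^{1/2}$.
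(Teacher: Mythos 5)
Your proposal is correct and follows essentially the same route as the paper: test against $f$, discard the viscous term, split off the full Muskat nonlinearity whose pairing with $f$ has a sign (the logarithmic identity from \cite{CCGRPS-JEMS2013}), and bound the cut-off remainder supported in $|\alpha|\le 2\eps$ by $C\eps^{1/2}\lA f\rA_{\dot H^{3/2}}\lA f\rA_{L^2}$. The only (immaterial) difference is that the paper applies Cauchy--Schwarz in $\alpha$ and the square-function characterization \eqref{n15AN1}, whereas you use the pointwise bound $\lA\Delta_\alpha f_x\rA_{L^2}\lesssim|\alpha|^{-1/2}\lA f\rA_{\dot H^{3/2}}$ before integrating in $\alpha$.
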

\begin{proof}
Set
\be\label{f10}
R_\eps(f)=-\frac{1}{\pi}\int_\xR\frac{\partial_x\Delta_\alpha f}{1+\left(\Delta_\alpha f\right)^2}
\chi\left(\frac{\alpha}{\varepsilon}\right)\dalpha.
\ee
We multiply the equation by $f$ to obtain
$$
\frac{1}{2}\fract \lA f(t)\rA_{L^2}^2\le
\frac{1}{\pi}\bigg\langle \int_\xR\frac{\partial_x\Delta_\alpha f}{1+\left(\Delta_\alpha f\right)^2}\dalpha , f\bigg\rangle
+\langle R_\eps(f),f\rangle.
$$
Now, by \cite[Section 2]{CCGRPS-JEMS2013}, the first term in the 
right-hand side has a sign. Indeed: 
\begin{multline*}
\int_\xR\left[\int_\xR\frac{\partial_x\Delta_\alpha f}{1+\left(\Delta_\alpha f\right)^2}\dalpha \right]f(x)\dx
\\
=-\iint_{\xR^2}\log\left[\sqrt{1+\frac{(f(t,x)-f(t,x-\alpha))^2}{\alpha^2}}\right]\dx\dalpha.
\end{multline*}
It remains to estimate $R_\eps(f)$. To do so, we use the estimate~\e{n15AN1} to get
\be\label{f9}
\begin{aligned}
\lA R_\eps(f)\rA_{L^2}
&\lesssim  \int_{|\alpha|\leq 2\varepsilon}\lA\Delta_\alpha f_{x}\rA_{L^2}\dalpha \\
&  \lesssim \varepsilon^{\mez}\left(\int_{\xR}\lA\Delta_\alpha f_{x}\rA_{L^2}^2\dalpha\right)^\mez
  \lesssim \varepsilon^{\mez}\lA f\rA_{\dot{H}^{\tdm}},
\end{aligned}
\ee
which completes the proof.
\end{proof}
\subsection{Estimate of the H\"older norm}
To exploit the Sobolev energy estimate~\e{a2}, the main difficulty is to estimate from above the factor $1+\lA f_x(t)\rA_{L^\infty}^2$. 
This is where we will apply Lemma~\ref{L:2.1}. This in turn requires to 
estimate the H\"older norm $\lA \cdot\rA_{\dot{C}^{2,\beta_0}}$ of $f$. 
This is the purpose of the following result.

We will prove an estimate valid on arbitrary large time scale, which will be used later 
to prove a global existence result. 
\begin{proposition}\label{P:2.6}
For any $0<\betathree<1/2$, 
there exist two positive constant $\eps_0$ and $c_0$  
such that, for any $\eps\in (0,\eps_0]$, any 
smooth solution $f\in C^1([0,T];H^{\infty}(\xR))$ 
of the Muskat equation~\eqref{n2}, and any time $t\leq \min\{\varepsilon^{-c_0},T\}$, 
there holds
\begin{align*}
&\varepsilon^{\betathree}\int_{0}^{t}\lA f(\tau)\rA_{C^{2,\betathree}}d\tau 
\le  \varepsilon^{\frac{\betathree}{2}}\lA f_0\rA_{\dot H^{\frac{3}{2}}} \\
&~~~+ \varepsilon^{\frac{\betathree}{2}}\left(1+\sup_{s\in [0,t]}\lA f(s)\rA_{H^{\frac{3}{2}}}\right)^2
\log\left(2+\int_{0}^{t} \lA f(s)\rA_{\dot H^{2}}^2\ds\right)^{\mez}
\left(\int_{0}^{t} \lA f(s)\rA_{\dot H^{2}}^2\ds\right)^{\mez}.\nonumber
\end{align*}
\end{proposition}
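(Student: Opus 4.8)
The plan is to exploit the added viscosity $\nu\partial_x^2 f$ with $\nu=|\log(\eps)|^{-1}$, which is the only mechanism able to reach the $\dot C^{2,\beta}$ norm. By the one–dimensional Sobolev embedding $\dot H^{5/2+\beta}(\xR)\hookrightarrow\dot C^{2,\beta}(\xR)$ (i.e.\ $\partial_{xx}f\in\dot H^{1/2+\beta}\hookrightarrow\dot C^{0,\beta}$ for $\beta\in(0,1)$) one has $\|f\|_{\dot C^{2,\beta}}\lesssim\|f\|_{\dot H^{5/2+\beta}}$, and the target exponent $5/2+\beta$ lies strictly above the level $\dot H^2$ reached by the intrinsic order–one Muskat dissipation controlled in Proposition~\ref{P:3.3}. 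I would therefore write the Duhamel formula for~\eqref{n2},
\begin{equation*}
f(\tau)=e^{\nu\tau\partial_x^2}(f_0\star\chi_\eps)+\int_0^\tau e^{\nu(\tau-s)\partial_x^2}\,G(f(s))\,\ds,\qquad G(f)=\frac1\pi\int_\xR\frac{\partial_x\Delta_\alpha f}{1+(\Delta_\alpha f)^2}\Big(1-\chi(\tfrac{\alpha}{\eps})\Big)\dalpha,
\end{equation*}
and estimate $\int_0^t\|f(\tau)\|_{\dot H^{5/2+\beta}}\,\dtau$ by splitting the linear and nonlinear contributions. The guiding principle is that the gain of $k$ derivatives by $e^{\nu\rho\partial_x^2}$ costs a factor $(\nu\rho)^{-k/2}$, and the hypothesis $\beta<1/2$ is exactly what keeps the resulting time singularities integrable.

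For the linear contribution, smoothing from $\dot H^{3/2}$ to $\dot H^{5/2+\beta}$ gains $1+\beta<2$ derivatives, so, using $|\widehat{\chi_\eps}|\le1$, one gets $\|e^{\nu\tau\partial_x^2}(f_0\star\chi_\eps)\|_{\dot H^{5/2+\beta}}\lesssim(\nu\tau)^{-(1+\beta)/2}\|f_0\|_{\dot H^{3/2}}$. Since $(1+\beta)/2<1$, integrating in $\tau\in(0,t)$ produces a constant times $\nu^{-(1+\beta)/2}t^{(1-\beta)/2}\|f_0\|_{\dot H^{3/2}}$. Multiplying by the prefactor $\eps^{\beta}$ and inserting $\nu=|\log(\eps)|^{-1}$ and $t\le\eps^{-c_0}$ yields $\eps^{\,\beta-c_0(1-\beta)/2}|\log(\eps)|^{(1+\beta)/2}\|f_0\|_{\dot H^{3/2}}$; choosing $c_0$ small enough that $\beta-c_0(1-\beta)/2\ge\beta/2$, and then $\eps_0$ small enough to absorb the logarithm, this is $\le\eps^{\beta/2}\|f_0\|_{\dot H^{3/2}}$, the first term on the right.

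For the nonlinear contribution I would split $G$ into its order–one part, whose symbol is $\sim|\xi|$ and which maps $\dot H^2$ into $\dot H^1$, and the quadratic/curvature remainder. Reaching $\dot H^{5/2+\beta}$ from $\dot H^1$ gains $3/2+\beta<2$ derivatives, so the kernel $(\nu\rho)^{-(3/2+\beta)/2}$ is again time-integrable; a Minkowski/Young estimate in time followed by Cauchy--Schwarz converts $\int_0^t\|f\|_{\dot H^2}\,\ds$ into $\big(\int_0^t\|f\|_{\dot H^2}^2\,\ds\big)^{1/2}$, and the same balancing of $\eps$-powers bounds this piece by $\eps^{\beta/2}\big(\int_0^t\|f\|_{\dot H^2}^2\,\ds\big)^{1/2}$. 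For the quadratic remainder I would not use naive smoothing but the pointwise structural identities behind Lemma~\ref{L:2.1} (the formula for $\partial_x G$ and estimate~\eqref{X3}), which exhibit the worst term as $\partial_x^2 f$ multiplied by coefficients built from $\partial_x f$ and the curvature factor $(1+(\partial_x f)^2)^{-1}$. Estimating its Hölder norm through a borderline logarithmic (Brezis--Gallouet/Chemin--Gallagher-type) interpolation between $\dot H^{3/2}$, $\dot H^2$ and the viscous $\dot H^{5/2}$-bound furnishes the factors $\big(1+\sup_{s\le t}\|f(s)\|_{H^{3/2}}\big)^2$ and $\log\big(2+\int_0^t\|f\|_{\dot H^2}^2\,\ds\big)^{1/2}$, while a final Cauchy--Schwarz in time produces the remaining $\big(\int_0^t\|f\|_{\dot H^2}^2\,\ds\big)^{1/2}$.

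The main obstacle is precisely this top-order quadratic term: $\dot C^{2,\beta}$ sits above the $\dot H^2$ regularity that the intrinsic Muskat dissipation provides, and a crude heat-smoothing of a quadratic nonlinearity only known to be $L^2$-bounded (from $f\in\dot H^2\cap\dot W^{1,\infty}$) would require the non-integrable kernel $(\nu\rho)^{-(5/2+\beta)/2}$, which is the circularity one must break. Doing so forces one to combine the algebraic cancellations of the nonlinearity (as used in~\cite{CG-CMP2,Cameron,Gancedo-Lazar-H2} and in the proof of Lemma~\ref{L:2.1}) with the logarithmic interpolation above, and then to check the quantitative balance that the small viscosity $|\log(\eps)|^{-1}$ acting over the long horizon $t\le\eps^{-c_0}$ still leaves a net gain of $\eps^{\beta/2}$. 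It is this last balance that fixes the admissible sizes of $c_0$ and $\eps_0$ and accounts for the half-power $\eps^{\beta/2}$ appearing on the right-hand side.
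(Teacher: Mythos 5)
Your proposal follows essentially the same route as the paper: Sobolev embedding into $\dot H^{5/2+\beta}$, the Duhamel/heat-smoothing estimate with the same derivative gains $1+\beta$ (data) and $3/2+\beta$ (nonlinearity, integrable precisely because $\beta<1/2$), a Brezis--Gallouet-type logarithmic bound for the $\dot H^1$-norm of the nonlinearity (this is exactly the paper's Proposition~\ref{P:2.8}, giving the factors $(1+\sup_s\lA f(s)\rA_{H^{3/2}})^2\log(\cdots)^{1/2}$), Cauchy--Schwarz in time, and the final balance of $\eps^{\beta}$ against $t\le\eps^{-c_0}$ and $|\log\eps|$ powers. The argument is correct and matches the paper's proof in all essential respects.
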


\begin{proof}
The classical Sobolev embeddings implies that
$$
\lA f(t)\rA_{\dot{C}^{2,\betathree}}
\lesssim \lA f(t)\rA_{\dot H^{\frac{5}{2}+\betathree}}.
$$
To estimate the latter Sobolev norm, the key point will be to apply the following interpolation inequality.

\begin{lemma}
Consider three real numbers
$$
\gamma>0,\quad \beta_1>0\quad\text{and}\quad 0<\beta_2<2.
$$
Then, there exists a constant $C$ such that, for any function $g=g(t,x)$, 
\begin{equation}\label{X2}
\begin{aligned}
\lA g(t)\rA_{\dot H^\gamma}
&\lesssim \frac{1}{(\nu t)^{\frac{\beta_1}{2}}}
\lA g(0)\rA_{\dot H^{\gamma-\beta_1}}\\
&\quad+\int_0^{t}  \frac{1}{(\nu (t-s))^{\frac{\beta_2}{2}}}\blA 
\partial_tg(s)-\nu\partial_{xx} g(s)\brA_{\dot H^{\gamma-\beta_2}}\ds.
\end{aligned}
\end{equation}
\end{lemma}
\begin{proof}
Set $G\defn\partial_tg-\nu\partial_{xx}  g$. Then, one has, 
\begin{equation*}
\hat g(t,\xi)=e^{-\nu t|\xi|^{2}}\hat g(0,\xi)+\int_{0}^{t}e^{-\nu (t-s)|\xi|^{2}}\hat G(s,\xi)\ds.
\end{equation*}
The desired results then follows from Minkowski's inequality. 
\end{proof}

Now, apply \eqref{X2} with
$$
\gamma=\frac{5}{2}+\betathree,
\quad\beta_1=1+\betathree, \quad\beta_2=\frac{3}{2}+\betathree,\quad \nu=|\log(\varepsilon)|^{-1},
$$
to get
\begin{align}
\lA f(t)\rA_{\dot C^{2,\betathree}}&\les \lA f(t)\rA_{\dot H^{\frac{5}{2}+\betathree}}\nonumber\\
&\lesssim |\log(\varepsilon)|^{\frac{1+\betathree}{2}} t^{-\frac{1+\betathree}{2}}||f_0||_{\dot H^{\frac{3}{2}}}\nonumber\\
&\quad+\int_0^{t}   |\log(\varepsilon)|^{\frac{\frac{3}{2}+\betathree}{2}}
(t-s)^{-\frac{\frac{3}{2}+\betathree}{2}} \blA\partial_tf-|\log(\varepsilon)|^{-1}\partial_x^2f\brA_{\dot H^1}\ds.\label{Y1}
\end{align}

It remains to estimate the $\dot{H}^1$-norm of $\partial_tf-|\log(\varepsilon)|^{-1}\partial_{xx} f$. 
In view of the equation~\e{n2}, this is equivalent to bound the $\dot{H}^1$-norm of 
$$
\frac{1}{\pi}\int_\xR\frac{\partial_x\Delta_\alpha f}{1+\left(\Delta_\alpha f\right)^2}\left(1-\chi\left(\frac{\alpha}{\varepsilon}\right)\right)\dalpha.
$$
We will split the latter term into two pieces and estimate them separately. 

Firstly, directly from~\eqref{X3} and Minkowski's inequality, we obtain that
\begin{align*}
\lA \frac{1}{\pi}\int_\xR\frac{\partial_x\Delta_\alpha f}{1+\left(\Delta_\alpha f\right)^2}
\chi\left(\frac{\alpha}{\varepsilon}\right)\dalpha\rA_{\dot H^1}
&\lesssim  \int_{|\alpha|\leq 2\varepsilon}\left(\lA\Delta_\alpha f_{xx}\rA_{L^2}
+\lA\Delta_\alpha f_{x}\rA_{L^4}^2\right)\dalpha\\
&  \lesssim \varepsilon^{\mez+\beta}\left(\int_{\xR}\lA\Delta_\alpha f_{xx}\rA_{L^2}^2|\alpha|^{-2\beta}\dalpha\right)^\mez+
\int_{\xR}\lA\Delta_\alpha f_{x}\rA_{L^4}^2\dalpha.
\end{align*}
Now we use the following inequality: 
\be\label{n15AN1}
\iint_{\xR^2}  \bla \Delta_\alpha \tilde{f}\bra^2 |\alpha|^{-2\beta}\dalpha\dx\sim 
\blA \tilde{f}\brA_{\dot{H}^{\mez+\beta}}^2.
\ee
Indeed,
$$
\iint_{\xR^2}  \bla \Delta_\alpha \tilde{f}\bra^2 |\alpha|^{-2\beta}\dalpha\dx=
\iint_{\xR^2}  \left[\frac{\bla \tilde{f}(x)-\tilde{f}(x-\alpha)\bra}{\la \alpha\ra^{1/2+\beta}}\right]^2\frac{\dalpha}{\la\alpha\ra}\dx
\sim \blA \tilde{f}\brA_{\dot{H}^{\mez+\beta}}^2.
$$
Similarly, using Sobolev embedding in Besov's spaces, we get
$$
\int_{\xR}\lA\Delta_\alpha f_{x}\rA_{L^4}^2\dalpha\les \lA f\rA_{\dot H^{\frac{7}{4}}}^2.
$$
It follows that
\be\label{f1}
\lA \frac{1}{\pi}\int_\xR\frac{\partial_x\Delta_\alpha f}{1+\left(\Delta_\alpha f\right)^2}
\chi\left(\frac{\alpha}{\varepsilon}\right)\dalpha\rA_{\dot H^1}
\lesssim \varepsilon^{\mez+\beta}\lA f\rA_{\dot{H}^{\frac{5}{2}+\beta}}+ \lA f\rA_{\dot H^{2}}  \lA f\rA_{\dot H^{\frac{3}{2}}},
\ee
where we used an interpolation inequality in Sobolev spaces. 
On the other hand, it follows from the estimate~\eqref{n33} below that,
\be\label{f2}
\begin{aligned}
\lA \int_\xR\frac{\partial_x\Delta_\alpha f}{1+\left(\Delta_\alpha f\right)^2}\dalpha\rA_{\dot H^1}
&\les  \lA \mathcal{T}(f)f\rA_{\dot H^1}\\
&\les \left(
1+\lA f\rA_{H^{\frac32}}\right)^2
\log\left(2+\lA f\rA_{\dot H^2}^2\right)^{\mez} \lA f\rA_{\dot H^{2}}.
\end{aligned}
\ee
By gathering the two previous estimates, we conclude that
\begin{multline*}
\lA\partial_tf-|\log(\varepsilon)|^{-1}\partial_x^2f\rA_{\dot H^1}\\
\lesssim \varepsilon^{\frac{1}{2}+\beta}\lA f\rA_{H^{\frac{5}{2}+\beta}}+
\left(1+\lA f\rA_{H^{\frac{3}{2}}}\right)^2\log\left(2+\lA f\rA_{\dot H^2}^2\right)^{\mez}  \lA f\rA_{\dot H^{2}}.
\end{multline*}
Set
$$
b=\frac{\frac{3}{2}+\betathree}{2}\cdot
$$
By reporting this bound in~\e{Y1}, we find that
\begin{align*}
&\lA f(t)\rA_{\dot H^{\frac{5}{2}+\betathree}}\lesssim |\log(\varepsilon)|^{\frac{1+\betathree}{2}} t^{-\frac{1+\betathree}{2 }}\lA f_0\rA_{\dot H^{\frac{3}{2}}}\\
&\quad + \varepsilon^{\frac{1}{2}+\beta}|\log(\varepsilon)|^{b}\int_0^{t}    (t-s)^{-b} \lA f(s)\rA_{\dot H^{\frac{5}{2}+\beta}}\ds\\
&\quad+|\log(\varepsilon)|^{b}\int_0^{t}    (t-s)^{-b} 
\left(1+\lA f(s)\rA_{H^{\frac{3}{2}}}\right)^2\log\left(2+\lA f(s)\rA_{\dot H^2}^2\right)^{\mez}  \lA f(s)\rA_{\dot H^{2}}\ds.
\end{align*}
So, 
\begin{align*}
&\int_{0}^{t}\lA f(\tau )\rA_{\dot H^{\frac{5}{2}+\betathree}}\dtau \lesssim \la\log(\varepsilon)\ra^{\frac{1+\betathree}{2 }} t^{1-\frac{1+\betathree}{2 }}\lA f_0\rA_{\dot H^{\frac{3}{2}}}\\
&\quad+ \varepsilon^{\frac{1}{2}+\beta}|\log(\varepsilon)|^{b} t^{1-b} \int_0^{t}\lA f(s)\rA_{\dot H^{\frac{5}{2}+\beta}}\ds\\
&\quad+|\log(\varepsilon)|^{b}
t^{1-b}\int_0^{t}\left(1+\lA f(s)\rA_{H^{\frac{3}{2}}}\right)^2\log\left(2+\lA f(s)\rA_{\dot H^2}^2\right)^{\mez}  \lA f(s)\rA_{\dot H^{2}}\ds.
\end{align*}

As a result, there exists $c_0>0$ and $\eps_0\le 1$ such that, if $t\leq \varepsilon^{-c_0}$ and $\varepsilon\le \eps_0$, 
\begin{align*}
\int_{0}^{t}\lA f(\tau )\rA_{\dot H^{\frac{5}{2}+\betathree}}\dtau &\le \varepsilon^{-\frac{\betathree}{2}}||f_0||_{\dot H^{\frac{3}{2}}} \\
&\quad +|\log(\varepsilon)|^{b}  t^{1-b} \mathcal{K}(t)\int_0^{t}    \log\left(2+\lA f(s)\rA_{\dot H^2}^2\right)^{\mez}  \lA f(s)\rA_{\dot H^{2}}\ds,
\end{align*}
where
$$
\mathcal{K}(t)=\sup_{s\in [0,t]}\left(1+\lA f(s)\rA_{H^{\frac{3}{2}}}\right)^2.
$$
Now observe that
\begin{multline*}
\int_0^{t}    \log\left(2+\lA f(s)\rA_{\dot H^2}^2\right)^{\mez}  \lA f(s)\rA_{\dot H^{2}}\ds\\
\le (t+1)^{\frac{1}{2}}    \log\left(2+ \int_{0}^{t} \lA f(s)\rA_{\dot H^{2}}^2ds\right)^{\mez}
\left( \int_{0}^{t} \lA f(s)\rA_{\dot H^{2}}^2ds\right)^{\frac{1}{2}}.
\end{multline*}
Therefore, up to modifying the values of $c_0>0$ and $\eps_0$, we see that, 
for $t\leq \varepsilon^{-c_0}$ and $\varepsilon\le \eps_0$, we have
\begin{align*}
&\varepsilon^{\betathree}\int_{0}^{t}\lA f(\tau)\rA_{\dot C^{2,\betathree}}\dtau
\lesssim  \varepsilon^{\frac{\betathree}{2}}\lA f_0\rA_{\dot H^{\frac{3}{2}}} \\&~~~+ \varepsilon^{\frac{\betathree}{2}}\mathcal{K}(t)\log\left(2+ \int_{0}^{t} \lA f(s)\rA_{\dot H^{2}}^2\ds\right)^{\mez} \left( \int_{0}^{t} \lA f(s)\rA_{\dot H^{2}}^2\ds\right)^{\frac{1}{2}}.
\end{align*}
This completes the proof.
\end{proof}

\subsection{Global in time estimates, under a smallness assumption}

\begin{proposition}
Let $T>0$ and consider a 
smooth solution $f\in C^1([0,T],H^{\infty}(\xR))$ of the Muskat equation~\e{n2}. 
Set
$$
K=1+16 \left(\frac{C_2}{C_1}\right)^2
$$
and assume that
\be\label{a29}
2\left(K+\frac{C_0}{C_1}\right)^\mez\left(2+\lA \partial_x f_0\rA_{L^\infty}\right)^2\lA f_0\rA_{\dot {H}^{\tdm}}\le 1,  
\ee
where the constants $C_0,C_1,C_2$ are as defined in the statements of 
Lemma~\ref{L:2.1} and Proposition~\ref{P:3.3}. 
Then there exists $\eps_0$ depending only on $C_0,C_1,C_2$ and $||f_0||_{L^2}$ such that, if $\eps\le \eps_0$, then 
\be\label{a30}
\sup_{0\le \tau\le T}\lA f(\tau)\rA_{{H}^{\tdm}}
\le \frac{1}{\sqrt{K}\big(2+\lA \partial_xf_0\rA_{L^\infty}\big)^2}\quad\text{and}
\quad
\int_{0}^{T}\lA f(\tau)\rA_{\dot{H}^2}^2\dtau\leq \frac{1}{C_0} \cdot
\ee
\end{proposition}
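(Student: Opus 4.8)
The plan is to run a continuity (bootstrap) argument on the three coupled quantities governed by the preceding estimates: the homogeneous energy $A(t)=\|f(t)\|_{\dot H^{3/2}}^2$, its accumulated dissipation $\int_0^t\|f\|_{\dot H^2}^2\,d\tau$, and the Lipschitz seminorm $L(t)=\|\partial_x f(t)\|_{L^\infty}$. I apply Proposition~\ref{P:3.3} with the trivial weight $\phi\equiv 1$, for which $\mu_\phi\equiv 1$ and $A_\phi,B_\phi,P_\phi$ reduce to $\|f\|_{\dot H^{3/2}}^2$, $\|f\|_{\dot H^2}^2$, $\|f\|_{\dot H^{5/2}}^2$; writing $B(t)=\|f\|_{\dot H^2}^2$, the energy inequality then reads $\frac{d}{dt}A+C_1\,B/(1+L^2)\le C_2(\sqrt A+A)B$. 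Set $L_0=\|\partial_x f_0\|_{L^\infty}$ and $M=1/(\sqrt K\,(2+L_0)^2)$, and let $T^\ast$ be the supremum of the times $t\le\min\{T,\eps^{-c_0}\}$ on which the bootstrap bounds $1+L(\tau)^2\le(2+L_0)^2$ and $A(\tau)\le M^2$ both hold for all $\tau\le t$. They hold at $t=0$: convolution with the mollifier $\chi_\eps$ (a nonnegative kernel of mass one) increases neither the $\dot H^{3/2}$ seminorm nor the Lipschitz seminorm, and hypothesis~\eqref{a29} forces $\|f_0\|_{\dot H^{3/2}}\le M/2$ since $(K+C_0/C_1)^{1/2}\ge\sqrt K$. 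The goal is to show both bounds improve strictly on $[0,T^\ast]$, forcing $T^\ast=\min\{T,\eps^{-c_0}\}$.

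First I would treat the energy decay. On $[0,T^\ast]$ the Lipschitz bootstrap gives $1+L^2\le(2+L_0)^2$, so the energy inequality becomes $\frac{d}{dt}A+\frac{C_1}{(2+L_0)^2}B\le C_2(\sqrt A+A)B$. Since $M\le 1/4$, the bound $A\le M^2$ yields $\sqrt A+A\le 2\sqrt A\le 2M$, and the choice $K=1+16(C_2/C_1)^2$ is exactly what makes $2C_2/\sqrt K\le C_1/2$, hence $C_2(\sqrt A+A)\le C_1/(2(2+L_0)^2)$. Absorbing this term leaves $\frac{d}{dt}A+\frac{C_1}{2(2+L_0)^2}B\le 0$. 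Integrating gives the monotone decay $A(t)\le A(0)\le\|f_0\|_{\dot H^{3/2}}^2$ together with $\int_0^t B\le \frac{2(2+L_0)^2}{C_1}\|f_0\|_{\dot H^{3/2}}^2$. Feeding in $\|f_0\|_{\dot H^{3/2}}\le 1/(2(K+C_0/C_1)^{1/2}(2+L_0)^2)$ from~\eqref{a29} produces $A(t)\le M^2/4<M^2$ and $\int_0^t B\le \frac{1}{2(C_1K+C_0)(2+L_0)^2}\le \frac{1}{8C_0}<\frac{1}{C_0}$. This strictly improves the $A$-bootstrap and establishes the second asserted bound.

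Next I would close the Lipschitz bound. Integrating Lemma~\ref{L:2.1} gives $L(t)\le L_0+C_0\int_0^t B+C_0\eps^{\beta_0}\int_0^t\|f\|_{\dot C^{2,\beta_0}}$, where the first integral is $\le C_0\cdot\frac{1}{8C_0}=\frac18$ by the dissipation bound just obtained. For the Hölder term I invoke Proposition~\ref{P:2.6}, whose right-hand side is $\eps^{\beta_0/2}$ times $\|f_0\|_{\dot H^{3/2}}$ and the factor $\mathcal K(t)=\sup(1+\|f\|_{H^{3/2}})^2$; to bound $\mathcal K$ I control the low frequencies by Lemma~\ref{L:L2}, which using $\|f\|_{\dot H^{3/2}}\le M$ gives $\|f(t)\|_{L^2}\le\|f_0\|_{L^2}+C\eps^{1/2}Mt\le\|f_0\|_{L^2}+1$ for $t\le\eps^{-c_0}$ once $\eps$ is small and $c_0<1/2$. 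Hence $\mathcal K(t)$ is bounded by a constant depending only on $\|f_0\|_{L^2}$ and $\|f_0\|_{\dot H^{3/2}}$, and Proposition~\ref{P:2.6} bounds the Hölder term by $\eps^{\beta_0/2}C(f_0)$. Choosing $\eps_0$ small (depending on $\|f_0\|_{L^2}$ precisely through this step) gives $L(t)\le L_0+\frac14<1+L_0$, so $1+L(t)^2<(2+L_0)^2$, the strict improvement. With both bootstraps improved, $T^\ast=\min\{T,\eps^{-c_0}\}$ and the two conclusions follow with room to spare; in particular $\sup_\tau\|f(\tau)\|_{\dot H^{3/2}}\le M/2<M$.

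The main obstacle is the circular coupling the argument must untangle: the energy decay needs the Lipschitz bound, which needs the dissipation bound and the Hölder control of Proposition~\ref{P:2.6}, which in turn needs the full $H^{3/2}$ norm and hence the $L^2$ bound of Lemma~\ref{L:L2}. Two features make the loop close. First, the calibration $K=1+16(C_2/C_1)^2$ is precisely what lets the superlinear term $C_2(\sqrt A+A)B$ be absorbed into half the dissipation. Second, and more delicate, the energy decay is monotone, so the \emph{total} dissipation $C_0\int_0^t B$ is bounded by a universal fraction independent of the length of the interval; this is what keeps the Lipschitz seminorm from drifting and is the crux of a genuinely global, rather than merely local, estimate. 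The remaining technical point I expect to be hardest is that Proposition~\ref{P:2.6} and Lemma~\ref{L:L2} carry the restriction $t\le\eps^{-c_0}$ and generate the regularization errors of size $\eps^{\beta_0}$ and $\eps^{1/2}t$; these must be rendered negligible, which is what fixes $\eps_0$ in terms of $\|f_0\|_{L^2}$, and the time-uniformity of all the resulting bounds is exactly what allows the estimate to persist and, after letting $\eps\to0$, to furnish the global-in-time solution.
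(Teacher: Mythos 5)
Your proposal is correct and follows essentially the same route as the paper: a continuity/bootstrap argument with $\phi\equiv1$, absorption of the nonlinear term $C_2(\sqrt A+A)B$ into the dissipation via the calibration $K=1+16(C_2/C_1)^2$, monotone decay of $A$ giving the uniform dissipation bound, and control of the Lipschitz seminorm through Lemma~\ref{L:2.1}, Proposition~\ref{P:2.6} and the $L^2$ bound of Lemma~\ref{L:L2}, with $\eps_0$ fixed at that last step. The only (immaterial) difference is that you place the Lipschitz bound directly in the bootstrap set where the paper bootstraps on the accumulated dissipation $\int_0^t B$ and derives the Lipschitz bound from it.
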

\begin{proof}
We apply the previous {\em a priori\/} estimate~\e{a2} 
in the simplest case where~$\phi=1$. With this choice, the quantities $A_\phi$ and $B_\phi$ defined by~\e{n67} simplify to
\be\label{n167}
\begin{aligned}
A(t)&=\blA \D^{\tdm}f(t)\brA_{L^2}^2, \\
B(t)&=\blA \D^{2}f(t)\brA_{L^2}^2=\lA f(t)\rA_{\dot{H}^2}^2.
\end{aligned}
\ee

Introduce the set
$$
I=\left\{ t\in [0,T]\,;\, \int_{0}^{t}B(\tau)\dtau\leq \frac{2}{3C_0} 
\text{ and } \sup_{0\le \tau\le t}A(\tau)\le \frac{1}{K\big(2+\lA \partial_xf_0\rA_{L^\infty}\big)^4}\right\}.
$$
We want to prove that $I=[0,T]$. Since $0$ belongs to $I$ by assumption on the initial data, and since $I$ is closed, it suffices to prove that $I$ is open. 
To do so, we consider a time $t^*\in [0,T)$ which belongs to $I$. 
Our goal is to prove that
$$
\int_{0}^{t^*}B(\tau)\dtau\leq \frac{1}{2C_0} 
\text{ and } \sup_{0\le \tau\le t^*}A(\tau)\le \frac{1}{4K\big(2+\lA \partial_xf_0\rA_{L^\infty}\big)^4}.
$$
This will imply at once that $t^*$ belongs to the interior of $I$.

Since $\mu(t)=1$ for $\phi\equiv 1$, the estimate~\e{a2} implies that there are two positives 
constants $C_1,C_2$ such that
\be\label{a4}
\fract A(t)+C_1\frac{B(t)}{1+\lA \partial_x f(t)\rA_{L^\infty}^2}\leq C_2\Big(A(t)+\sqrt{A(t)}\Big)B(t).
\ee
By combining Proposition~\ref{P:2.6} with Lemma~\ref{L:2.1}, we get, for any $t$,
\begin{align*}
&	\lA \partial_xf(t)\rA_{L^\infty}-	\lA \partial_xf_0\rA_{L^\infty}\leq C_0\int_{0}^{t}B(\tau) \dtau+C_0 \varepsilon^{\frac{\betathree}{2}}\lA f_0\rA_{\dot H^{\frac{3}{2}}}\\&+
C_0	\varepsilon^{\frac{\betathree}{2}}\left[\sup_{s\in [0,t]}\left(1+\lA f(s)\rA_{H^{\frac{3}{2}}}\right)^2\right]\log\left(2+\int_{0}^{t} B(\tau)\dtau\right)^{\mez}
\left( \int_{0}^{t} B(\tau)\dtau\right)^{\frac{1}{2}}.
\end{align*}
By \eqref{X3deux}, 
\begin{equation*}
	\sup_{s\in [0,t]}\lA f(s)\rA_{L^{2}}\le  \lA f_0\rA_{L^2}+C\eps^{\mez}t \sup_{s\in [0,t]}\lA f(s)\rA_{\dot H^{\frac{3}{2}}}.
\end{equation*}
This implies 
\begin{equation}\label{f11}
 \sup_{s\in [0,t]}\lA f(s)\rA_{H^{\frac{3}{2}}}\le  \lA f_0\rA_{L^2}+(1  +C\eps^{\mez}t) \sup_{s\in [0,t]}\lA f(s)\rA_{\dot H^{\frac{3}{2}}}.
\end{equation}

If $t\le t^*$, then the bound on the integral of $B$,    $ \sup_{0\le \tau\le t^*}||f||_{\dot H^{\frac{3}{2}}}\le1$ and \eqref{f11}  imply  that 
\begin{align*}
	&	\lA \partial_xf(t)\rA_{L^\infty}-	\lA \partial_xf_0\rA_{L^\infty}\leq \frac{1}{2}+C_0 \varepsilon^{\frac{\betathree}{2}}+
	C_0	\varepsilon^{\frac{\betathree}{2}}\left(1+ \lA f_0\rA_{L^2}+(1  +C\eps^{\mez}t^*)\right)^2\log\left(3\right)^{\mez}.
\end{align*}
 For $\eps$ small enough, we conclude that
$$
\lA \partial_xf(t)\rA_{L^\infty}\le \frac{2}{3}+\lA \partial_xf_0\rA_{L^\infty}.
$$
On the other hand, if $t^*\in I$, then for any $t\le t^*$ 
we have
\begin{equation*}
A(t)+\sqrt{A(t)}\le 2\sqrt{A(t)}\le \frac{2}{\sqrt{K}\big(2+\lA \partial_xf_0\rA_{L^\infty}\big)^2}\cdot
\end{equation*}
Consequently, for any $t\leq t^\star$, \eqref{a4} gives
$$
\fract A(t)+C_1\frac{B(t)}{\big(2+\lA \partial_{x}f_0\rA_{L^\infty}\big)^2}
\leq \frac{2C_2}{\sqrt{K}\big(2+\lA \partial_xf_0\rA_{L^\infty}\big)^2}B(t).
$$
By definition of $K$, we have
$$
K\ge \frac{16C_2^2}{C_1^2},
$$
so, 
for any $t\leq t^\star$,
\begin{equation}\label{a20}
\fract A(t)+\frac{C_1}{2}\frac{B(t)}{\big(2
+\lA\partial_{x}f_0\rA_{L^\infty}\big)^2}\leq 0.
\end{equation}
Integrate this on the time interval $[0,t^*]$, to infer that 
\begin{equation*}
\sup_{t\in [0,t^*]}A(t)+\frac{C_1}{2\big(2
+\lA\partial_{x}f_0\rA_{L^\infty}\big)^2}
\int_{0}^{t^*}B(t)\dt\leq A(0).
\end{equation*}
Using the smallness assumption~\e{a29}, the previous inequality~\e{a20} implies at once that
\begin{align*}
&\sup_{t\in [0,t^*]}A(t)\le A(0)\le \frac{1}{4K\big(2+\lA \partial_xf_0\rA_{L^\infty}\big)^4},\\
&\int_{0}^{t^*}B(t)\dt 
\leq \frac{2\big(2+\lA\partial_{x}f_0\rA_{L^\infty}\big)^2}{C_1}A(0)\le \frac{1}{2C_0}.
\end{align*}
These are the wanted bootstrap inequalities. As explained above, by connexity, this proves that $I=[0,T]$, 
which implies the desired results in~\e{a30}.
\end{proof}

\subsection{A priori estimates locally in time, for arbitrary initial data}
\begin{proposition}
Consider $\phi$ satisfying assumptions~$(\rm{H}\ref{H1})$--$(\rm{H}\ref{H3})$ in Definiton~\ref{defi:D}. 
Let $T>0$ and consider a 
smooth solution $f\in C^1([0,T],H^{\infty}(\xR))$ of the Muskat equation~\e{n2}. 
For any $M_0>0$ there exists $\eps_0>0$ and $T_0>0$ such that the following properties holds. 
If $\eps\in (0,\eps_0]$ and 
$$
\blA \D^{\tdm,\phi}f(0)\brA_{L^2}^2\le M_0,
$$
then, with $T^*=\min\{T,T_0\}$, there holds
$$
\sup_{t\in [0,T^*]}A_\phi(t)\le 5M_0,\quad \int_0^{T^*}\mu_\phi(t)^2B_\phi(t)\dt\le \frac{1}{C_0},
$$
where $A_\phi$, $B_\phi$, $\mu_\phi$ are defined in~\e{n67} while $C_0$ is given by Lemma~\ref{L:2.1}.
\end{proposition}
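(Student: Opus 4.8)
The plan is to run a continuity (bootstrap) argument on $[0,T^*]$ with $T^*=\min\{T,T_0\}$. First I would introduce the set
$$
I=\Big\{t\in[0,T^*]\ :\ \sup_{0\le\tau\le t}A_\phi(\tau)\le 5M_0\ \text{ and }\ \int_0^t\mu_\phi(\tau)^2B_\phi(\tau)\,\dtau\le \frac{1}{C_0}\Big\}.
$$
Since $A_\phi(0)=\blA\D^{\tdm,\phi}f(0)\brA_{L^2}^2\le M_0$, the time $0$ belongs to $I$, and $I$ is closed because the two functionals are continuous along the smooth solution. It therefore suffices to prove that $I$ is open, and I would do this by showing that any $t^*\in I$ in fact satisfies the strictly better bounds $\sup_{[0,t^*]}A_\phi\le 2M_0$ and $\int_0^{t^*}\mu_\phi^2B_\phi\le\frac{1}{2C_0}$, so that $t^*$ lies in the interior of $I$ by continuity. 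All constants below may depend on $M_0$ and $\phi$ but never on $t^*$ or on $\varepsilon$.

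\emph{Step 1 (control of the Lipschitz norm).} The first task is to bound the factor $1+\lA f_x(t)\rA_{L^\infty}^2$ on $[0,t^*]$, since it governs the coercive term in Proposition~\ref{P:3.3}. Applying \e{Z20'} with $s=2$ gives $\lA f(\tau)\rA_{\dot H^2}^2\le C\mu_\phi(\tau)^2B_\phi(\tau)$, so the bootstrap bound on $\int\mu_\phi^2B_\phi$ transfers the controlled dissipation into a genuine bound
$$
C_0\int_0^{t^*}\lA f(\tau)\rA_{\dot H^2}^2\,\dtau\le C_0\,C\int_0^{t^*}\mu_\phi^2B_\phi\,\dtau\le C,
$$
with $C$ independent of $M_0$. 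Using $\lA f\rA_{\dot H^{3/2}}\le\sqrt{A_\phi}\le\sqrt{5M_0}$ together with Lemma~\ref{L:L2} I control $\sup_{[0,t^*]}\lA f\rA_{L^2}$, hence $\sup_{[0,t^*]}\lA f\rA_{H^{3/2}}$, once $\varepsilon$ is small. Feeding these bounds into Proposition~\ref{P:2.6} shows that the time-integrated Hölder norm $\varepsilon^{\beta_0}\int_0^{t^*}\lA f\rA_{\dot C^{2,\beta_0}}\,\dtau$ is arbitrarily small for $\varepsilon_0$ small (this is where $t^*\le T_0\le\varepsilon^{-c_0}$ is used). Integrating Lemma~\ref{L:2.1} in time then yields $1+\lA f_x(t)\rA_{L^\infty}^2\le\Lambda$ on $[0,t^*]$ for a constant $\Lambda=\Lambda(M_0,\phi)$.

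\emph{Step 2 (closing the energy estimate).} With the Lipschitz bound in hand I would insert $1+\lA f_x\rA_{L^\infty}^2\le\Lambda$ and $\sqrt{A_\phi}+A_\phi\le\sqrt{5M_0}+5M_0=:D_0$ into \e{a2}, drop the non-negative term $|\log\varepsilon|^{-1}P_\phi$, and obtain
$$
\fract A_\phi+\frac{C_1}{\Lambda}B_\phi\le C_2 D_0\,\mu_\phi B_\phi.
$$
Then I split $[0,t^*]$ according to the size of $\mu_\phi$, with threshold $\mu_*=\frac{C_1}{2\Lambda C_2D_0}$. On $\{\mu_\phi\le\mu_*\}$ the right-hand side is absorbed by half of the coercive term, leaving $\fract A_\phi+\frac{C_1}{2\Lambda}B_\phi\le0$; there $\mu_\phi^2B_\phi\le\mu_*^2B_\phi$, and the coercive term controls $\int B_\phi$, so this part of $\int\mu_\phi^2B_\phi$ is small. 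On $\{\mu_\phi>\mu_*\}$ the frequency ratio entering $\mu_\phi$ is bounded, so by the doubling property $(\rm{H}\ref{H2})$ and $A_\phi\le 5M_0$ one has $B_\phi\le C(M_0)$; there $A_\phi$ grows at the bounded rate $C_2D_0\,C(M_0)$ and $\mu_\phi^2B_\phi\le C(M_0)$, while the set has measure at most $T_0$. Integrating over $[0,t^*]$, both contributions are made as small as required by choosing $T_0$ and $\varepsilon_0$ small depending on $M_0$ and $\phi$, giving $\sup_{[0,t^*]}A_\phi\le A_\phi(0)+M_0\le 2M_0$ and $\int_0^{t^*}\mu_\phi^2B_\phi\le\frac{1}{2C_0}$.

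The main obstacle is exactly the interplay in Step 2: the nonlinear term $C_2(\sqrt{A_\phi}+A_\phi)\mu_\phi B_\phi$ can be absorbed into the coercive term only where $\mu_\phi$ is small, that is where the solution genuinely carries high frequencies, whereas in the low-frequency regime one has no smallness from $\mu_\phi$ and must instead use that $B_\phi\lesssim A_\phi$ there and that the elapsed time is short. Balancing the threshold $\mu_*$, which depends on the a priori large Lipschitz bound $\Lambda$, against the low-frequency bound $C(M_0)$ and the smallness of $T_0$ is the delicate point; it is also the structural reason why the time of existence must be allowed to depend on $M_0$ rather than on the critical norm alone. A secondary but essential point is that the scheme is self-consistent only because \e{Z20'} converts the controlled quantity $\int\mu_\phi^2B_\phi$ into the bound on $\int\lA f\rA_{\dot H^2}^2$ that feeds the Lipschitz estimate in Step 1.
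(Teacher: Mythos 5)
Your overall architecture coincides with the paper's: a closed--open bootstrap on the pair $\big(\sup A_\phi,\int\mu_\phi^2B_\phi\big)$, with the Lipschitz factor in \eqref{a2} controlled by combining \eqref{Z20'}, Lemma~\ref{L:L2}, Proposition~\ref{P:2.6} and Lemma~\ref{L:2.1}, and then an absorption argument in the energy inequality. Step 1 is essentially identical to the paper's treatment of the quantity $\nu(t)$ and is fine.

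The gap is in Step 2, in the contribution of the set $\{\mu_\phi\le\mu_*\}$ to $\int\mu_\phi^2B_\phi$. You fix the threshold $\mu_*=\frac{C_1}{2\Lambda C_2D_0}$ by the absorption requirement, bound that contribution by $\mu_*^2\int_{\{\mu_\phi\le\mu_*\}}B_\phi\lesssim \mu_*^2\frac{2\Lambda}{C_1}\big(A_\phi(0)+O(T_0)\big)$, and then assert that it is ``made as small as required by choosing $T_0$ and $\varepsilon_0$ small.'' It is not: once $M_0$ and the data are fixed, $\mu_*$, $\Lambda$ and $A_\phi(0)$ are fixed, so this term is a fixed positive number with no reason to be below $\frac{1}{4C_0}$ (the constants $C_0,C_1,C_2$ come from unrelated lemmas and cannot be compared). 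The two roles you are asking $\mu_*$ to play --- small enough to absorb the nonlinearity, and small enough to make $\mu_*^2\int B_\phi$ beat $\frac{1}{C_0}$ --- require a second free parameter. The paper keeps this parameter explicit: after first deriving the non-small bound $\int_0^{t^*}B_\phi\le \frac{8A_\phi(0)}{C_1}(2+\lA\partial_xf_0\rA_{L^\infty})^2$, it writes $\mu_\phi(t)\le\big(\phi(B_\phi(t)/4A_\phi(0))\big)^{-1}$ and splits on $B_\phi\lessgtr r_0$, using $({\rm H}\ref{H3})$ to show that $r\mapsto(\phi(r/4A_\phi(0)))^{-2}r$ is increasing; the large-$B_\phi$ part is killed by choosing $r_0$ large (so that $(\phi(r_0/4A_\phi(0)))^{-2}$ is small against the fixed bound on $\int B_\phi$), and only then is $T_0$ chosen to kill the small-$B_\phi$ part. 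Your argument is repairable in the same spirit --- take $\mu_*$ to be the minimum of the absorption threshold and a second small number chosen against $\frac{1}{C_0}$, accepting that the constant $C(M_0)$ on $\{\mu_\phi>\mu_*\}$ then degrades and must be compensated by a smaller $T_0$ --- but as written the step fails.
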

\begin{proof}
For this proof we skip the index $\phi$ and write simply $A,B,\mu$. 

Since (see~\e{Z20'}), 
$$
\lA f(t)\rA_{\dot H^2}\le C\mu(t) B(t)^{\mez}.
$$
We then apply Proposition~\ref{P:2.6} for some fixed parameter $\beta>0$. 
Then, it follows from~\e{a2} that
\be\label{a200}
\fract A(t)+C_1\frac{B(t)}{\nu(t)^2}\leq C_2 \left( \sqrt{A(t)}+A(t) \right)\mu(t) B(t),
\ee
where
\begin{align*}
&\nu(t)=1+	\lA \partial_xf_0\rA_{L^\infty}+ C_0\int_{0}^{t}\mu(\tau)^{2}B(\tau) \dtau
+C_0 \varepsilon^{\frac{\betathree}{2}}\lA f_0\rA_{\dot H^{\frac{3}{2}}}\\&+
C_0	\varepsilon^{\frac{\betathree}{2}}\left[\sup_{\tau\in [0,t]}\left(1+\lA f(\tau)\rA_{H^{\tdm}}\right)^2\right]\log\left(2+\int_{0}^{t} \mu(\tau)^{2}B(\tau)\dtau\right)^{\mez}
\left( \int_{0}^{t} \mu(\tau)^{2}B(\tau)\dtau\right)^{\frac{1}{2}}.
\end{align*}

Given a positive number $T_0$ to be determined, 
introduce the set
$$
I(T_0)=\left\{ t\in [0,\min\{T,T_0\}]\,;\, \int_{0}^{t}\mu(\tau)^2B(\tau)\dtau\leq \frac{2}{3C_0} 
\text{ and } \sup_{0\le \tau\le t}A(\tau)\le 5M_0\right\}.
$$
We want to prove that $I(T_0)=[0,\min\{T,T_0\}]$. Since $0$ belongs to $I(T_0)$ by assumption 
on the initial data, and since $I(T_0)$ is closed, it suffices to prove that $I(T_0)$ is open. 
To do so, we consider a time $t^*\in [0,\min\{T,T_0\})$ which belongs to $I(T_0)$. 
Our goal is to prove that
$$
\int_{0}^{t^*}\mu(\tau)^2B(\tau)\dtau\leq \frac{1}{2C_0} 
\text{ and } \sup_{0\le \tau\le t^*}A(\tau)\le 4M_0.
$$
This will imply at once that $t^*$ belongs to the interior of $I(T_0)$.

As in the previous proof, we use  \eqref{f11} to write
\begin{equation}\label{f12}
 \sup_{s\in [0,t]}\lA f(s)\rA_{H^{\frac{3}{2}}}\le  \lA f_0\rA_{L^2}+(1  +C\eps^{\mez}t) \sup_{s\in [0,t]}\lA f(s)\rA_{\dot H^{\frac{3}{2}}}.
\end{equation}

It $t\le t^*$ with $t^*\in I(T_0)$, then 
\begin{align*}
\nu(t)&\le 1+	\lA \partial_xf_0\rA_{L^\infty}+ \frac{2}{3}
+C_0 \varepsilon^{\frac{\betathree}{2}}M_0\\
&\quad+
C_0	\varepsilon^{\frac{\betathree}{2}}\left(1+\lA f_0\rA_{L^2}+6M_0\right)^2 \log\left(2+
\frac{2}{3C_0}\right)^{\mez}
\left(\frac{2}{3C_0}\right)^{\frac{1}{2}}.
\end{align*}
Hence, one can define $\eps_0$ small enough, depending only on $M_0$, $\lA f_0\rA_{L^2}$ 
and the fixed parameter $\beta$, 
such that if $\eps\le \eps_0$ and 
if $t^*\in I(T_0)$, then for any $t\in [0,t^*]$, we have
$$
\nu(t)\le 2+\lA \partial_xf_0\rA_{L^\infty}.
$$
Consequently
\begin{align*}
\fract A(t)
+C_1
\frac{B(t)}{(2+\lA \partial_{x}f_0\rA_{L^\infty})^2}
\leq C_2\Big(A(t)+\sqrt{A(t)}\Big)
\mu(t)B(t).
\end{align*}
Introduce the function
$$
\mathcal{E}(r,m)\defn \sup_{\rho\ge 0}\left\{C_2
\big(\sqrt{r}+r\big)\left(\phi\left(\frac{\rho}{r}\right)\right)^{-1}
\rho-\frac{C_1}{2}\frac{\rho}{m}\right\}\cdot
$$
Then, for any $t\in [0,t^*]$, we have
\begin{align*}
\fract A(t)+\frac{C_1}{2}\frac{B(t)}{\big(2+\lA\partial_{x}f_0\rA_{L^\infty}\big)^2}
\leq\mathcal{E}\Big(A(t),\lA\partial_{x}f_0\rA_{L^\infty}\Big).
\end{align*}
Assume that the number $T_0$ satisfies
$$
T_0\le\frac{A(0)}{4\mathcal{E}\big(4A(0),\lA\partial_{x}f_0\rA_{L^\infty}\big)}\cdot
$$
Then, for any $t\leq t^*$, we get that
\begin{equation*}
\sup_{\tau\leq t} A(\tau)+\frac{C_1}{2}\frac{1}{\big(2+\lA\partial_{x}f_0\rA_{L^\infty}\big)^2}
\int_{0}^{t}B(\tau)\dtau\leq 4A(0).
\end{equation*}
In particular, for $t=t^*$, this gives
\begin{equation}\label{Z2}
\sup_{t\leq t^*} A(t)\le 4A(0),\quad 
\int_{0}^{t^*}B(t)\dt\leq \frac{8A(0)}{C_1}\big(2+\lA\partial_{x}f_0\rA_{L^\infty}\big)^2.
\end{equation}
To get the result, we must show that 
\begin{equation}\label{Z}
C_0\int_{0}^{T}\mu(t)^2 B(t)\dt\leq \frac{1}{2}.
\end{equation}
Recall that 
$$
\mu(t)=\left(\phi\left(\frac{B(t)}{A(t)}\right)\right)^{-1}.
$$
Since $\phi$ is increasing and since $A(t)\le 4A(0)$, we have
$$
\mu(t)\le \left(\phi\left(\frac{B(t)}{4A(0)}\right)\right)^{-1}.
$$
Now, we claim that the function $F\colon [0,+\infty)\to[0,+\infty)$, defined by
$$
F(r)=\left(\phi\left(\frac{r}{4A(0)}\right)\right)^{-1}r,
$$
is increasing. To see this decompose $F(r)$ under the form 
$F(r)=F_1(r)\left(F_2\left(r\right)\right)^2$ with 
$$
F_1(r)=\frac{r}{(\log(\lambda_0+r))^2}\quad F_2(r)=\frac{\log(\lambda_0+r)}{\phi(r/4A(0))}\cdot
$$
Then
\begin{align*}
&\int_{0}^{t^*}\mu(t)B(t)\dt
\leq \int_{0}^{t^*}\left(\phi\left(\frac{B(t)}{4A(0)}\right)\right)^{-2}B(t)\dt\\
&\leq \int_{0}^{t^*}\left(\phi\left(\frac{r}{4A(0)}\right)\right)^{-2}r \dt
+\int_{0}^{t^*}\left(\phi\left(\frac{r}{4A(0)}\right)\right)^{-2}B(t)\dt\\
&\overset{\eqref{Z2}}\leq t^*\left(\phi\left(\frac{r}{4A(0)}\right)\right)^{-2}r+ 
\left(\phi\left(\frac{r}{4A(0)}\right)\right)^{-2}8A(0)\left(2+\lA\partial_{x}f_0\rA_{L^\infty}\right)^2,
\end{align*}
for any $r\geq 1$. Now we successively determine two numbers $r_0>1$ and $T_0>0$ such that 
 \begin{equation}
C_0\left(\phi\left(\frac{r_0}{4A(0)}\right)\right)^{-2}8A(0)\left(2+\lA\partial_{x}f_0\rA_{L^\infty}\right)^2
=\frac{1}{4},
\end{equation}
and
\begin{equation}
T_0\left(\phi\left(\frac{r_0}{4A(0)}\right)\right)^{-2}r_0=\frac{1}{4}\cdot
\end{equation}
With this choice we get~\eqref{Z} and we obtain that $I(T_0)=[0,\min\{T,T_0\}]$, which is equivalent to the statement of the proposition.
\end{proof}

\subsection{Transfer of compactness}\label{S:Transfer}
Previously, we have proven {\em a priori\/} estimates for the spatial derivatives. 
In this paragraph, we gather results from which we will infer estimates for the time derivative as well as 
for the nonlinearity in the Muskat equation. 
These estimates serve to pass to the limit the equation (which is needed to 
regularize the solutions).

The Muskat equation~\e{n1} can be written under the form
\begin{align}\label{eqT}
\partial_tf+\D f = \mathcal{T}(f)f,
\end{align}
where $\mathcal{T}(f)$ is the operator defined by
\be\label{def:T(f)f}
\mathcal{T}(f)g = -\frac{1}{\pi}\int_\xR\left(\partial_x\Delta_\alpha g\right)
\frac{\left(\Delta_\alpha f\right)^2}{1+\left(\Delta_\alpha f\right)^2}\dalpha.
\ee

We recall the following result from Proposition~$2.3$ in \cite{Alazard-Lazar} and 
from Remark~$2.9$ and Propositions~$2.10$ and~$2.13$ in~\cite{AN1}.
\begin{proposition}\label{P:2.11}
$i)$ For all $\delta\in [0,1/2)$, there exists a constant $C>0$ such that, 
for all functions $f_1,f_2$ in $\dot{H}^{1-\delta}(\xR)\cap \dot{H}^{\tdm+\delta}(\xR)$, 
$$
\lA (\mathcal{T}(f_1)-\mathcal{T}(f_2))f_2\rA_{L^2}
\le C \lA f_1-f_2\rA_{\dot{H}^{1-\delta}}\lA f_2\rA_{\dot{H}^{\tdm+\delta}}.
$$
$ii)$ One can decompose the nonlinearity under the form
\be\label{n1T}
\mathcal{T}(f)g
=\frac{(\partial_xf)^2}{1+(\partial_xf)^2}\D g +V(f)\partial_x g+R(f,g),
\ee
where the coefficient $V(f)$ 
and the remainder term $R(f,g)$ satisfy 
the following estimates:
\begin{align}
&\lA V(f)\rA_{L^\infty}\le 
C \int_\xR \la \xi\ra \bla\hat{f}(\xi)\bra\dxi,\label{XV}\\
&\lA R(f,g)\rA_{L^2}\le C\Vert g\Vert _{\dot{H}^{\frac{3}{4}}}
\lA f\rA_{\dot{H}^{\frac{7}{4}}},\label{XR}
\end{align}
for some absolute constant $C$. 
Moreover,
\begin{equation}\label{X1}
\lA \mathcal{T}(f)f\rA_{\dot H^1}\le 
C\left(\lA f\rA_{\dot H^{\frac32}}+\lA f\rA_{\dot H^{\frac32}}^2+1
+\lA V(f)\rA_{L^\infty}\right) \lA f\rA_{\dot H^{2}},
\end{equation}
and,
\be\label{XH}
\la \big(V(f)\partial_x  g,\D g\big)\ra
\le C\Big( \lA f\rA_{\dot{H}^2}+\lA f\rA_{\dot{H}^{\frac{7}{4}}}^2\Big)
\lA g\rA_{\dot{H}^\mez}\lA g\rA_{\dot{H}^1}.
\ee
\end{proposition}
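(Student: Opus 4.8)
Since Proposition~\ref{P:2.11} assembles estimates already proved in \cite{Alazard-Lazar,AN1}, the plan is to reconstruct those arguments around a single principle: the operator $\mathcal{T}(f)$ has principal symbol $\frac{(\partial_xf)^2}{1+(\partial_xf)^2}|\xi|$, which one extracts by freezing the slope $\Delta_\alpha f$ at its limit $\partial_xf$ as $\alpha\to0$. Throughout I would rely on the difference-quotient identity~\e{n15AN1} and on Besov embeddings to convert weighted $\alpha$-integrals of $\lA\Delta_\alpha\cdot\rA_{L^r}$ into homogeneous Sobolev norms.

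For $i)$, I would first record that $t\mapsto t^2/(1+t^2)$ is globally Lipschitz, since its derivative $2t/(1+t^2)^2$ is bounded; hence
\[
\la\tfrac{(\Delta_\alpha f_1)^2}{1+(\Delta_\alpha f_1)^2}-\tfrac{(\Delta_\alpha f_2)^2}{1+(\Delta_\alpha f_2)^2}\ra\lesssim\la\Delta_\alpha(f_1-f_2)\ra.
\]
Substituting this into the definition~\e{def:T(f)f} gives the pointwise bound $\la(\mathcal{T}(f_1)-\mathcal{T}(f_2))f_2(x)\ra\lesssim\int_\xR\la\partial_x\Delta_\alpha f_2(x)\ra\,\la\Delta_\alpha(f_1-f_2)(x)\ra\dalpha$. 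I would then take the $L^2_x$ norm, move it inside the $\alpha$-integral by Minkowski, apply H\"older in $x$ with exponents $1/p+1/q=1/2$, and finally use Cauchy--Schwarz in $\alpha$ against the conjugate weights $\la\alpha\ra^{\pm\beta}$. Via~\e{n15AN1} and the embedding $\dot B^{s}_{2,2}\hookrightarrow\dot B^{s'}_{p,2}$, the first factor becomes $\lA f_2\rA_{\dot H^{\tdm+\delta}}$ and the second $\lA f_1-f_2\rA_{\dot H^{1-\delta}}$; the parameter $\delta$ enters symmetrically and the total regularity $(\tdm+\delta)+(1-\delta)=\tfrac52$ matches the homogeneity of $\mathcal{T}$.

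For $ii)$, I would produce the decomposition~\e{n1T} from the splitting
\[
\frac{(\Delta_\alpha f)^2}{1+(\Delta_\alpha f)^2}=\frac{(\partial_xf)^2}{1+(\partial_xf)^2}+\left[\frac{(\Delta_\alpha f)^2}{1+(\Delta_\alpha f)^2}-\frac{(\partial_xf)^2}{1+(\partial_xf)^2}\right].
\]
The first, $\alpha$-independent term pulls out of the integral and, using $-\frac1\pi\int_\xR\partial_x\Delta_\alpha g\,\dalpha=\D g$ (a direct symbol computation gives $|\xi|$), yields the leading term $\frac{(\partial_xf)^2}{1+(\partial_xf)^2}\D g$. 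The bracket vanishes like $|\alpha|$ at the origin; extracting from it the part linear in $\partial_xg$ gives the transport term $V(f)\partial_xg$, and the rest is $R(f,g)$. The bound~\e{XV} follows by writing $V(f)$ as a singular integral of $f$ and estimating on the Fourier side, where the integrand is dominated by $\la\xi\ra\la\hat f(\xi)\ra\in L^1_\xi$; the bilinear bound~\e{XR}, of total regularity $\tfrac34+\tfrac74=\tfrac52$, closes by H\"older together with~\e{n15AN1} once the double-difference structure of $R$ has spread the derivatives evenly.

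It remains to prove~\e{X1} and~\e{XH}, which are the analytic core and where I expect the difficulty to lie. For~\e{X1} I would apply $\partial_x$ to~\e{n1T} with $g=f$: the top-order contributions $\frac{(\partial_xf)^2}{1+(\partial_xf)^2}\partial_x\D f$ and $\partial_x\!\big(\frac{(\partial_xf)^2}{1+(\partial_xf)^2}\big)\D f$ are absorbed into $\lA f\rA_{\dot H^2}$ times the stated prefactor by the Leibniz rule and the (borderline) control of the Lipschitz norm by $\lA f\rA_{\dot H^{\tdm}}$, while the transport and remainder pieces are handled by~\e{XV}--\e{XR}. The estimate~\e{XH} is the main obstacle: both $V(f)\partial_xg$ and $\D g$ are of order one, so a crude bound would only give $\lA g\rA_{\dot H^1}^2$. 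The gain comes from symmetrization: since $\D=|D_x|$ is self-adjoint and $\partial_x$ skew-adjoint, one finds
\[
\big(V(f)\partial_xg,\D g\big)=-\tfrac12\big((\partial_xV(f))g,\D g\big)-\tfrac12\big(\partial_xg,[\D,V(f)]g\big),
\]
so that the genuine first-order part cancels and only the commutator $[\D,V(f)]$, which is of order zero and hence gains a derivative, survives. Estimating $\partial_xV(f)$ and $[\D,V(f)]$ by Coifman--Meyer/Kato--Ponce type bounds and splitting the remaining budget as $\lA g\rA_{\dot H^\mez}\lA g\rA_{\dot H^1}$ then yields the prefactor $\lA f\rA_{\dot H^2}+\lA f\rA_{\dot H^{\frac74}}^2$, the quadratic term arising from the nonlinear part of $V(f)$.
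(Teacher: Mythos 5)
The paper contains no proof of Proposition~\ref{P:2.11}: it is explicitly recalled from Proposition~2.3 of \cite{Alazard-Lazar} and from Remark~2.9 and Propositions~2.10 and~2.13 of \cite{AN1}, so there is no internal argument to compare yours against. Judged on its own terms, your reconstruction follows the same strategy as those references: freezing the slope $\Delta_\alpha f$ at $\partial_x f$ to extract the principal part, converting weighted $\alpha$-integrals of differences into homogeneous Sobolev norms via \eqref{n15AN1} and Besov embeddings, and using the identity $-\frac1\pi\int_\xR\partial_x\Delta_\alpha g\,\dalpha=\D g$, which is correct. Your symmetrization identity for \eqref{XH} is also algebraically correct and is indeed the mechanism by which the first-order transport term is tamed.

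There are, however, two genuine gaps. First, for \eqref{X1} you invoke ``the (borderline) control of the Lipschitz norm by $\lA f\rA_{\dot H^{3/2}}$''. This is false: $\dot H^{1/2}(\xR)\not\hookrightarrow L^\infty(\xR)$, so $\lA\partial_xf\rA_{L^\infty}$ is not controlled by $\lA f\rA_{\dot H^{3/2}}$. That failure is precisely why \eqref{X1} retains the separate factor $\lA V(f)\rA_{L^\infty}$, controlled by $\int_\xR|\xi||\hat f(\xi)|\dxi$ rather than by $\lA f\rA_{\dot H^{3/2}}$, and why the paper needs Proposition~\ref{P:2.8} to bound that quantity by a logarithmic interpolation; the term $\partial_x\big(\tfrac{(\partial_xf)^2}{1+(\partial_xf)^2}\big)\D f$ must be closed using the $L^1_\xi$ quantity, not $\dot H^{3/2}$. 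Second, for \eqref{XR} and especially \eqref{XH} the exponent bookkeeping is asserted rather than carried out, and it is not routine: the right-hand side of \eqref{XH} permits only the norms $\lA g\rA_{\dot H^{1/2}}$ and $\lA g\rA_{\dot H^{1}}$, whereas a naive H\"older application of your commutator identity places $g$ or $\D g$ in $L^\infty$ or $\dot H^{1/4}$-type spaces that these two norms do not control (the same endpoint obstruction as above). Closing this requires the finer paraproduct and commutator analysis of \cite{AN1}, which your sketch does not supply.
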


For later purpose, we need a refinement of~\e{X1}.

\begin{proposition}\label{P:2.8}
There exists a positive constant $C>0$ such that, 
for all function $f\in H^2(\xR)$,
\begin{equation}\label{n33}
\lA \mathcal{T}(f)f\rA_{\dot H^1}\leq 
C \left(1+\lA f\rA_{H^{\frac32}}\right)^2
\log\left(2+\lA f\rA_{\dot H^2}^2\right)^{\mez} \lA f\rA_{\dot H^{2}}.
\end{equation}
\end{proposition}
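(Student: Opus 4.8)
The plan is to deduce~\e{n33} directly from the estimate~\e{X1} in Proposition~\ref{P:2.11}. Indeed, in the prefactor appearing in~\e{X1}, the three terms $\lA f\rA_{\dot H^{\tdm}}+\lA f\rA_{\dot H^{\tdm}}^2+1$ are all bounded by $C(1+\lA f\rA_{H^{\tdm}})^2$, so the only quantity not yet controlled in the desired form is $\lA V(f)\rA_{L^\infty}$. By~\e{XV}, it suffices to estimate $\int_\xR \la\xi\ra \bla\hat f(\xi)\bra\dxi$, and the whole proof reduces to the logarithmic interpolation inequality
\begin{equation*}
\int_\xR \la\xi\ra \bla\hat f(\xi)\bra\dxi
\les \big(1+\lA f\rA_{H^{\tdm}}\big)\,\log\big(2+\lA f\rA_{\dot H^2}^2\big)^{\mez}.
\end{equation*}

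To establish this inequality I would split the frequency domain at the threshold $R=2+\lA f\rA_{\dot H^2}^2$ and treat the two pieces by the Cauchy--Schwarz inequality. On the low-frequency region $\{\la\xi\ra\le R\}$, I factor the integrand as $\la\xi\ra\bla\hat f\bra=\big[\la\xi\ra(1+\la\xi\ra^2)^{-\tdm/2}\big]\cdot\big[(1+\la\xi\ra^2)^{\tdm/2}\bla\hat f\bra\big]$; the second factor is square-integrable with $L^2$-norm comparable to $\lA f\rA_{H^{\tdm}}$, while the first contributes $\big(\int_{\la\xi\ra\le R}\la\xi\ra^2(1+\la\xi\ra^2)^{-\tdm}\dxi\big)^{\mez}\les \log(2+R)^{\mez}$, the logarithm arising because this integrand decays like $\la\xi\ra^{-1}$ at high frequency. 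On the high-frequency region $\{\la\xi\ra> R\}$, I pair $\la\xi\ra^{-1}$ against $\la\xi\ra^2\bla\hat f\bra$ to obtain $\big(\int_{\la\xi\ra>R}\la\xi\ra^{-2}\dxi\big)^{\mez}\lA f\rA_{\dot H^2}\les R^{-\mez}\lA f\rA_{\dot H^2}$. With the above choice of $R$, the high-frequency term is $\le 1$ while the low-frequency term is $\les \log(2+\lA f\rA_{\dot H^2}^2)^{\mez}\lA f\rA_{H^{\tdm}}$; since $\log(2+r)^{\mez}$ is bounded below by a positive constant, the term coming from the high frequencies is absorbed and the displayed inequality follows.

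It then remains to combine the pieces. Inserting the bound $\lA V(f)\rA_{L^\infty}\les(1+\lA f\rA_{H^{\tdm}})\log(2+\lA f\rA_{\dot H^2}^2)^{\mez}$ into~\e{X1}, and using once more that $\log(2+r)^{\mez}$ is bounded below to absorb the non-logarithmic contributions coming from $\lA f\rA_{\dot H^{\tdm}}+\lA f\rA_{\dot H^{\tdm}}^2+1$, the full prefactor in~\e{X1} is bounded by $C(1+\lA f\rA_{H^{\tdm}})^2\log(2+\lA f\rA_{\dot H^2}^2)^{\mez}$, which is precisely~\e{n33}.

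The main obstacle is the logarithmic interpolation inequality above. Its significance is that $\int_\xR\la\xi\ra\bla\hat f\bra\dxi$ --- which controls $V(f)$ through the embedding of the Fourier-$L^1$ space into $\dot W^{1,\infty}(\xR)$ --- just fails to be dominated by the critical norm $\lA f\rA_{H^{\tdm}}$ alone; this is the familiar endpoint failure of the Sobolev embedding $H^{\tdm}(\xR)\hookrightarrow \dot W^{1,\infty}(\xR)$. The role of the cutoff at $R\sim\lA f\rA_{\dot H^2}^2$ is exactly to trade this borderline logarithmic divergence for the factor $\log(2+\lA f\rA_{\dot H^2}^2)^{\mez}$, at the price of introducing the stronger $\dot H^2$ norm --- which is acceptable here since that norm is square-integrable in time by the energy estimate~\e{a2}, making~\e{n33} usable in the subsequent time-integrated H\"older estimate of Proposition~\ref{P:2.6}.
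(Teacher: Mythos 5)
Your proposal is correct and follows essentially the same route as the paper: reduce via \eqref{X1} and \eqref{XV} to bounding $\int_\xR |\xi|\,|\hat f(\xi)|\,\dxi$, then split frequencies at a threshold comparable to $\lA f\rA_{\dot H^2}^2$ and apply Cauchy--Schwarz on each piece, the low frequencies producing the logarithm against the $H^{3/2}$-norm and the high frequencies being absorbed after pairing $|\xi|^{-1}$ with $|\xi|^2|\hat f|$. The only differences from the paper's proof are cosmetic (the exact choice of cutoff and the precise weights used in the low-frequency Cauchy--Schwarz).
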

\begin{proof}
In view of~\e{X1} and~\e{XV}, it is sufficient to estimate the $L^1$-norm of $\la \xi\ra \hat{f}$. 
Write,
\begin{align*}
 \int_\xR |\xi| |\hat f| \dxi 
 &=\int_{|\xi|>\lambda}  |\xi|^{-1}  |\xi|^{2}|\hat f| \dxi
 +\int_{|\xi|\leq \lambda}  (|\xi|+1)^{-\mez} |\xi|(1+|\xi|)^{\mez}|\hat f| \dxi \\
 &\lesssim\left(\int_{|\xi|>\lam} \frac{1}{|\xi|^{2}} \dxi \right)^{\mez} 
 \Vert f\Vert_{\dot H^2}+ \left(\int_{|\xi|\leq \lam} \frac{1}{(|\xi|+1)} \dxi \right)^{\mez}
\left( \Vert f \Vert_{\dot H^{\frac{3}{2}}}+ \Vert f\Vert_{L^2}\right)
\\&\lesssim \lam^{-\mez} \Vert f\Vert_{\dot H^2}+ \log(1+\lam)^{\mez}
\left( \Vert f \Vert_{\dot H^{\frac{3}{2}}}+ \Vert f\Vert_{L^2}\right).
\end{align*}
Choosing $\lambda=\lA f\rA_{\dot H^2}^2$, we obtain
$$
\int_\xR |\xi| |\hat f| \dxi \lesssim 1+\log(1+\lA f\rA_{\dot H^2}^2)^{\mez}
 \left( \Vert f \Vert_{\dot H^{\frac{3}{2}}}+ \Vert f\Vert_{L^2}\right).
$$
By reporting this in \e{XV} and then using \eqref{X1}, we get 
the desired result~\e{n33}. 
\end{proof}

By using the equation~\e{eqT}, we deduce at once the following bound.
\begin{corollary}
There exists a non-decreasing function $\mathcal{F}\colon \xR^+\to\xR^+$ such that, for any $T>0$, 
any $\eps$ and 
any smooth solution $f$ in $C^1([0,T];H^\infty(\xR))$ of the Muskat equation~\e{n2}, 
if one sets
\begin{equation*}
M_\eps(T)=\sup_{t\in [0,T]}\left(\lA f(t)\rA_{\dot H^{\frac{3}{2}}}^2
+\lA f(t)\rA_{L^2}^2\right)+\int_{0}^{T}\lA f(t)\rA_{\dot H^{2}}^2\dt+\la \log (\eps)\ra^{-1} 
\int_0^T\lA f(t)\rA_{\dot{H}^{\frac{5}{2}}}^2\dt
\end{equation*}
then, 
\begin{equation}\label{n35}
\int_{0}^{T}\frac{\lA \mathcal{T}(f)f\rA_{\dot H^1}^2}{\log\big(2+\lA \mathcal{T}(f)f\rA_{\dot H^1}\big)}\dt
\leq \mathcal{F}(M_\eps(T)), 
\end{equation}
and
\begin{equation}\label{n36}
\int_{0}^{T} \frac{\lA \partial_tf\rA_{\dot H^1}^2}{\log \big(2+\Vert\partial_tf\Vert_{\dot H^1}^2)}\dt\leq \mathcal{F}(M_\eps(T)).
\end{equation}
\end{corollary}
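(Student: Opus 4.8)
The plan is to treat \eqref{n35} as the substantial estimate and to deduce \eqref{n36} from it together with the equation. For \eqref{n35} everything is already contained in the refined bound \eqref{n33}; the only point is that the logarithmic weight in the denominator cancels the extra logarithmic factor produced by \eqref{n33}. For \eqref{n36} I would use \eqref{n2} to write $\partial_t f$ as a sum of four terms, one of which is exactly $\mathcal{T}(f)f$, and then reduce to \eqref{n35} plus elementary bounds for the remaining pieces.

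First I would record the elementary fact that $g(z)=z^2/\log(2+z)$ is nondecreasing on $[0,\infty)$ (its derivative has the sign of $2\log(2+z)-z/(2+z)\ge 2\log 2-1>0$). Inserting the upper bound of \eqref{n33} and using monotonicity gives
\[
\frac{\lA \mathcal{T}(f)f\rA_{\dot H^1}^2}{\log\big(2+\lA \mathcal{T}(f)f\rA_{\dot H^1}\big)}
\le g\Big(C\big(1+\lA f\rA_{H^{3/2}}\big)^2\log\big(2+\lA f\rA_{\dot H^2}^2\big)^{1/2}\lA f\rA_{\dot H^2}\Big).
\]
When $\lA f\rA_{\dot H^2}\ge 1$ the argument of $g$ is $\gtrsim \lA f\rA_{\dot H^2}$, so the denominator $\log(2+\cdot)$ is comparable to $\log(2+\lA f\rA_{\dot H^2}^2)$ and absorbs the numerator's logarithm; when $\lA f\rA_{\dot H^2}\le 1$ that logarithm is itself bounded. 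In both cases one is left with a pointwise bound of the form $(1+\lA f\rA_{H^{3/2}})^4\lA f\rA_{\dot H^2}^2$. Integrating in time and using that $\sup_{[0,T]}(1+\lA f\rA_{H^{3/2}})^4$ and $\int_0^T\lA f\rA_{\dot H^2}^2\dt$ are both controlled by $M_\eps(T)$ yields \eqref{n35}.

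For \eqref{n36} I would first rewrite the equation. Since the full singular integral equals $-\D f+\mathcal{T}(f)f$ by \eqref{eqT} and the cut-off part is $-R_\eps(f)$ by \eqref{f10}, the equation \eqref{n2} reads $\partial_t f = |\log(\eps)|^{-1}\partial_x^2 f-\D f+\mathcal{T}(f)f+R_\eps(f)$. The map $z\mapsto z^2/\log(2+z^2)$ is increasing and subadditive up to a universal constant (for nonnegative $a_i$ one has $(\sum a_i)^2\le 4\sum a_i^2$ while $\log(2+(\sum a_i)^2)\ge \log(2+\max_i a_i^2)$), so the left-hand side of \eqref{n36} is dominated by the sum of the corresponding quantities for the four terms. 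The $\mathcal{T}(f)f$ contribution is \eqref{n35} (replacing $\log(2+z)$ by $\log(2+z^2)$ only strengthens the denominator for large $z$ and is harmless for small $z$); the $\D f$ contribution is $\le \lA f\rA_{\dot H^2}^2$, whose integral is $\le M_\eps(T)$; and the $R_\eps(f)$ contribution is handled by \eqref{f1}, whose non-regularizing part $\lA f\rA_{\dot H^2}\lA f\rA_{\dot H^{3/2}}$ integrates against $M_\eps(T)$ while its remaining part carries a positive power of $\eps$.

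The main obstacle is the viscous term: in $\dot H^1$ it equals $|\log(\eps)|^{-1}\lA f\rA_{\dot H^3}$, a norm that $M_\eps(T)$ does not control directly, so the bound cannot come from the norms alone and must use that $f$ solves the parabolic equation \eqref{n2}. Here I would split $f$ at frequency $|\log(\eps)|$: on $\{|\xi|\le |\log(\eps)|\}$ one gains a full derivative for free, since $|\log(\eps)|^{-2}\int_{|\xi|\le|\log(\eps)|}|\xi|^6|\hat f|^2\dxi\le |\log(\eps)|^{-1}\lA f\rA_{\dot H^{5/2}}^2$, which is precisely the parabolic gain present in $M_\eps(T)$; on $\{|\xi|> |\log(\eps)|\}$ the frequencies are damped at rate $|\log(\eps)|^{-1}|\xi|^2> |\log(\eps)|^{-1}$, and their contribution is controlled through the parabolic smoothing of \eqref{n2}, using that both the regularized data $f_0\star\chi_\eps$ and the forcing are well localized. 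This is exactly the purpose of the logarithmic denominator in \eqref{n36} and of the term $|\log(\eps)|^{-1}\int_0^T\lA f\rA_{\dot H^{5/2}}^2\dt$ in the definition of $M_\eps(T)$: they are tailored so that the viscosity, which blows up in $\dot H^1$ as $\eps\to 0$ for a fixed profile but is tamed along genuine solutions, still produces a bound uniform in $\eps$.
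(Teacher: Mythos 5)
Your treatment of \eqref{n35} is correct and is essentially the paper's own argument: both rest on the observation that the logarithmic denominator cancels the factor $\log\big(2+\lA f\rA_{\dot H^2}^2\big)^{1/2}$ produced by \eqref{n33}, leaving a pointwise bound of the form $C\big(1+\lA f\rA_{H^{3/2}}\big)^4\lA f\rA_{\dot H^2}^2$ which integrates against $M_\eps(T)$. (The paper organizes the cancellation through the dichotomy $\lA \mathcal{T}(f)f\rA_{\dot H^1}\le \lA f\rA_{\dot H^2}$ versus its reverse, rather than through the monotonicity of $z\mapsto z^2/\log(2+z)$, but the computation is the same.)

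The gap is in \eqref{n36}, exactly at the point you yourself flag as ``the main obstacle''. Your decomposition $\partial_tf=|\log \eps|^{-1}\partial_x^2 f-\D f+\mathcal{T}(f)f+R_\eps(f)$ is the right one, and the last three contributions are handled correctly. But the viscous term is not closed. The low-frequency estimate is fine; on $\{|\xi|>|\log\eps|\}$, however, the inequality runs the wrong way: there $|\xi|^6\ge |\log\eps|\,|\xi|^5$, so $|\log\eps|^{-2}\int_{|\xi|>|\log\eps|}|\xi|^6|\hat f|^2\dxi$ dominates, rather than is dominated by, $|\log\eps|^{-1}\lA f\rA_{\dot H^{5/2}}^2$, and $M_\eps(T)$ controls nothing above $\dot H^{5/2}$. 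The appeal to ``parabolic smoothing'' is not an argument: the natural energy estimate one level up gives $|\log\eps|^{-2}\int_0^T\lA f\rA_{\dot H^{3}}^2\dt\les |\log\eps|^{-1}\lA f_0\star\chi_\eps\rA_{\dot H^2}^2+\int_0^T\lA N_\eps(f)\rA_{\dot H^1}^2\dt$, and the data term is of size $|\log\eps|^{-1}\eps^{-1}\lA f_0\rA_{\dot H^{3/2}}^2$, which is not uniform in $\eps$; the logarithmic denominator in \eqref{n36} only buys a factor $|\log\eps|^{-1}$ and cannot repair this. So, as written, the high-frequency viscous contribution is unproved. In fairness, the paper dispatches \eqref{n36} in one sentence (``similar argument, using the equation to estimate $\partial_tf$ in terms of $\mathcal{T}(f)f$'') and is silent on this term as well: you have correctly isolated the only genuinely delicate point of the corollary, but you have not resolved it.
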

\begin{proof}
Let $C$ be the constant given by Proposition~\ref{P:2.8} and set 
$\tilde{C}=\max\{C,1\}$. We claim that 
\begin{equation*}
\frac{\lA \mathcal{T}(f)f\rA_{\dot H^1}^2}{\log\big(2+\lA \mathcal{T}(f)f\rA_{\dot H^1}\big)}\leq 
\tilde{C}^2 \left(\lA f\rA_{\dot H^{\frac32}}
+\lA f\rA_{\dot H^{\frac32}}^2+\lA f\rA_{\dot{H}^1}+1\right)^2\lA f\rA_{\dot H^{2}}^2.
\end{equation*}
If $\lA \mathcal{T}(f)f\rA_{\dot H^1}\le \lA f\rA_{\dot H^{2}}$, then this is obvious. Otherwise, this follows at once from~\e{n33}. 
This implies~\e{n35}. 

The proof of~\e{n36} follows from similar argument, using the equation to estimate $\partial_tf$ in terms of $\mathcal{T}(f)f$. 
\end{proof}

It follows from the previous results that one can extract from the solutions of the approximate Cauchy problems~\e{n2} 
a sub-sequence converging to a solution of the Muskat equation~\e{n1}. 
Since it is rather classical, we do not include the details and refer for instance to~\cite{CG-CMP,Cordoba-Lazar-H3/2}. 

\subsection{Uniqueness}\label{S:3.5}
To prove the uniqueness of the solution to the Cauchy problem for rough initial data, 
we shall prove an estimate for the difference of two solutions. 

\begin{proposition}\label{P:2.10}
Let $T>0$ and consider two solutions $f_1,f_2$ of the Muskat equation, with initial data $f_{1,0},f_{2,0}$ respectively, 
satisfying 
$$
f_k\in C^0([0,T];\dot{W}^{1,\infty}(\xR)\cap\dot H^{\tdm}(\xR))\cap C^1([0,T];\dot H^{\mez}(\xR))\cap L^2(0,T;\dot{H}^2(\xR)),\quad k=1,2.
$$
Assume that 
\begin{equation}\label{Z105}
\sup_{t\in [0,T]}\left(\lA f_k(t)\rA_{\dot H^{\tdm}}^2+\lA f_k(t)\rA_{\dot W^{1,\infty}}^2\right)
+\int_0^{T} \lA f_k\rA_{\dot H^2}^2\dt \leq M<\infty,~~k=1,2.
\end{equation}
Then the difference $g=f_1-f_2$ is estimated by
\begin{equation}\label{Z107}
\sup_{t\in [0,T]} \Vert g(t) \Vert_{\dot{H}^\mez}\leq
\Vert g(0) \Vert_{\dot{H}^\mez}\exp\left(C(M+1)^{5} \int_0^T
\left(\lA f_1\rA_{\dot{H}^2}^2+\lA f_2\rA_{\dot{H}^2}^2\right)\dt \right).
\end{equation}
\end{proposition}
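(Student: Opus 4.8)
The plan is to derive an evolution inequality for $\|g(t)\|_{\dot H^{1/2}}^2$ and then close the estimate with Gr\"onwall's lemma. Since $g=f_1-f_2$ satisfies, by subtracting the two copies of the equation~\e{eqT}, the difference equation
\begin{equation*}
\partial_t g + \D g = \mathcal{T}(f_1)f_1 - \mathcal{T}(f_2)f_2,
\end{equation*}
I would first split the right-hand side in the natural bilinear fashion as $\mathcal{T}(f_1)g + (\mathcal{T}(f_1)-\mathcal{T}(f_2))f_2$. Taking the $\dot H^{1/2}$-inner product of the equation with $g$ and using that $\D$ contributes a good term, I obtain
\begin{equation*}
\frac12\fract \|g(t)\|_{\dot H^{1/2}}^2 + \|g(t)\|_{\dot H^1}^2
= \big(\mathcal{T}(f_1)g,g\big)_{\dot H^{1/2}} + \big((\mathcal{T}(f_1)-\mathcal{T}(f_2))f_2,g\big)_{\dot H^{1/2}}.
\end{equation*}

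The second term is the easy one: I would estimate it using part $i)$ of Proposition~\ref{P:2.11} with $\delta=0$, which gives $\|(\mathcal{T}(f_1)-\mathcal{T}(f_2))f_2\|_{L^2}\lesssim \|g\|_{\dot H^1}\|f_2\|_{\dot H^{3/2}}$, pairing it against $g$ in $\dot H^{1/2}$ by interpolating $\|g\|_{\dot H^{1/2}}$ between $\|g\|_{L^2}$-type and $\|g\|_{\dot H^1}$ norms and absorbing a factor of $\|g\|_{\dot H^1}$ into the parabolic term via Young's inequality. For the first, main term I would invoke the structural decomposition~\e{n1T}: writing $\mathcal{T}(f_1)g = \frac{(\partial_x f_1)^2}{1+(\partial_x f_1)^2}\D g + V(f_1)\partial_x g + R(f_1,g)$, the leading term has a favorable sign up to a commutator, the transport term $V(f_1)\partial_x g$ is controlled by~\e{XH}, and the remainder $R(f_1,g)$ is handled by~\e{XR}. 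The key is that every dangerous contribution can be bounded by a constant times $\big(\|f_1\|_{\dot H^2}^2+\|f_2\|_{\dot H^2}^2\big)\|g\|_{\dot H^{1/2}}^2$ plus a small multiple of $\|g\|_{\dot H^1}^2$ that gets absorbed by the left-hand parabolic term.

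The hard part will be handling the leading-order commutator coming from the term $\frac{(\partial_x f_1)^2}{1+(\partial_x f_1)^2}\D g$, since this is exactly where the quasi-linear and critical nature of the problem is concentrated: one cannot simply integrate by parts without generating a term involving $\partial_x f_1$ at top order, and the coefficient must be estimated in $L^\infty$ using the Lipschitz bound $\|f_1\|_{\dot W^{1,\infty}}\le M^{1/2}$ together with $\dot H^2$ control of its derivatives. The polynomial dependence $(M+1)^5$ in the exponent of~\e{Z107} is a bookkeeping consequence of collecting these various powers of $M$ (up to quartic from the denominators $1+(\partial_x f)^2$ appearing in several estimates, plus the extra factors from $V(f)$ via~\e{XV}). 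Once all terms are bounded in the form
\begin{equation*}
\frac12\fract \|g(t)\|_{\dot H^{1/2}}^2 \le C(M+1)^5\big(\|f_1\|_{\dot H^2}^2+\|f_2\|_{\dot H^2}^2\big)\|g(t)\|_{\dot H^{1/2}}^2,
\end{equation*}
the conclusion~\e{Z107} follows immediately from Gr\"onwall's inequality, using that $\int_0^T(\|f_1\|_{\dot H^2}^2+\|f_2\|_{\dot H^2}^2)\dt<\infty$ by~\e{Z105}.
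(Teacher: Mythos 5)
Your overall architecture -- an energy estimate for $\lA g\rA_{\dot H^{1/2}}^2$ based on the splitting $\mathcal{T}(f_1)g+(\mathcal{T}(f_1)-\mathcal{T}(f_2))f_2$, the structural decomposition \eqref{n1T}, and Gr\"onwall -- is the same as the paper's, but your treatment of the term you call ``easy'' does not close. Applying Proposition~\ref{P:2.11}~$i)$ with $\delta=0$ gives $\lA(\mathcal{T}(f_1)-\mathcal{T}(f_2))f_2\rA_{L^2}\les \lA g\rA_{\dot H^1}\lA f_2\rA_{\dot H^{3/2}}$, and pairing against $|D|g$ then produces a contribution of size $M^{1/2}\lA g\rA_{\dot H^1}^2$. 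This is exactly homogeneous of degree $2$ in the dissipation norm, contains no factor of $\lA g\rA_{\dot H^{1/2}}$, and its coefficient $\lA f_2\rA_{\dot H^{3/2}}\le M^{1/2}$ is neither small nor time-integrable; meanwhile the parabolic term only supplies $(1+M)^{-1}\lA g\rA_{\dot H^1}^2$, since the weight $(1+(\partial_xf_1)^2)^{-1}$ is bounded below only by $(1+M)^{-1}$. Young's inequality has nothing to trade here, and no smallness of $M$ is assumed, so the term cannot be absorbed. The fix -- and what the paper does -- is to take $\delta=1/4$, which gives $\lA g\rA_{\dot H^{3/4}}\lA f_2\rA_{\dot H^{7/4}}\lA g\rA_{\dot H^1}$; Gagliardo--Nirenberg then yields $\lA g\rA_{\dot H^{1/2}}^{1/2}\lA g\rA_{\dot H^1}^{3/2}\lA f_2\rA_{\dot H^{3/2}}^{1/2}\lA f_2\rA_{\dot H^2}^{1/2}$, where the power of $\lA g\rA_{\dot H^1}$ is $3/2<2$ and the time-integrable factor $\lA f_2\rA_{\dot H^2}$ appears, so Young's inequality produces a small multiple of $M^{-1}\lA g\rA_{\dot H^1}^2$ plus $C(M+1)^5\lA f_2\rA_{\dot H^2}^2\lA g\rA_{\dot H^{1/2}}^2$, which is the Gr\"onwall form.

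A smaller point: the ``leading-order commutator'' you identify as the hard part does not exist. In \eqref{n1T} the coefficient of $|D|g$ is a pointwise multiplier, so moving that term to the left-hand side combines with $|D|g$ to give $(1+(\partial_xf_1)^2)^{-1}|D|g$, and its pairing with $|D|g$ is $\int (1+(\partial_xf_1)^2)^{-1}(|D|g)^2\dx\ge (1+M)^{-1}\lA g\rA_{\dot H^1}^2$ directly -- no integration by parts, no commutator. The decomposition \eqref{n1T} is engineered precisely so that the quasi-linear leading term is sign-definite; the genuinely delicate contributions are $V(f_1)\partial_xg$ (handled by \eqref{XH}) and the choice of $\delta$ discussed above.
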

\begin{proof}
Since $\partial_tf_k+\D f_k = \mathcal{T}(f_k)f_k$, it follows from the decomposition~\eqref{n1T} 
of $\mathcal{T}(f_k)f_k$ that the difference $g=f_1-f_2$ satisfies
\begin{align*}
\partial_tg+\frac{\D g}{1+(\partial_xf_1)^2}
&=  V(f_1)\partial_x g+R(f_1,g)+\left(\mathcal{T}(f_2+g)-\mathcal{T}(f_2)\right)f_2.
\end{align*}
Since $g$ belongs to $C^1([0,T];\dot H^{\mez}(\xR))$, we may take 
the $L^2$-scalar product of this equation with $\D g$ to get
\begin{align*}
\frac{1}{2}\fract\Vert g \Vert^{2}_{\dot{H}^\mez}+\int\frac{( \D g)^2}{1+(\partial_x f_{1})^2} \dx
&\leq \left|\big(V(f_1)\partial_x g,|D| g\big)\right|+\lA R(f_1,g)\rA_{L^2}\lA g\rA_{\dot H^1}\\
&\quad+\lA \left(\mathcal{T}(f_2+g)-\mathcal{T}(f_2)\right)f_2\rA_{L^2}\lA g\rA_{\dot H^1}.
\end{align*}
It follows from Proposition~\ref{P:2.11} that
\begin{align*}
\fract\Vert g \Vert^{2}_{\dot{H}^\mez}+M^{-1}||g||_{\dot H^1}^2
&\lesssim  \left(\lA f_1\rA_{\dot{H}^2}+\lA f_1\rA_{\dot{H}^{\frac{7}{4}}}^2\right)
\lA g\rA_{\dot H^{\mez}}\lA g\rA_{\dot H^{1}}\\
&\quad+\lA f_2\rA_{\dot{H}^{\frac{7}{4}}}
\Vert g\Vert _{\dot{H}^{\frac{3}{4}}}|| g||_{\dot H^1}.
\end{align*}
By Gagliardo-Nirenberg interpolation inequality
\begin{align*}
\fract\Vert g \Vert^{2}_{\dot{H}^\mez}+M^{-1} ||g||_{\dot H^1}^2
&\lesssim  \lA f_1\rA_{\dot{H}^2}\left(1+\lA f_1\rA_{\dot{H}^{\frac{3}{2}}}\right)\lA g\rA_{\dot H^{\mez}}\lA g\rA_{\dot H^{1}}\\
&\quad+\lA f_2\rA_{\dot{H}^{2}}^{\frac{1}{2}}
\lA f_2\rA_{\dot{H}^{\frac{3}{2}}}^{\frac{1}{2}}\Vert g\Vert _{\dot{H}^{\frac{1}{2}}}^{\frac{1}{2}}
\lA g\rA_{\dot H^1}^{\frac{3}{2}}\\
&\lesssim   \lA f_1\rA_{\dot{H}^2}\left(1+M\right)\lA g\rA_{\dot H^{\mez}}\lA g\rA_{\dot H^{1}}+M^{\frac{1}{2}}\lA f_2\rA_{\dot{H}^{2}}^{\frac{1}{2}} \Vert g\Vert _{\dot{H}^{\frac{1}{2}}}^{\frac{1}{2}}\lA g\rA_{\dot H^1}^{\frac{3}{2}}.
\end{align*}
Thus, 	thanks to Holder's inequality, one gets
\begin{align*}
\fract\Vert g \Vert^{2}_{\dot{H}^\mez}+\frac{1}{2M} ||g||_{\dot H^1}^2
&\leq    C (M+1)^{5}  \left(\lA f_1\rA_{\dot{H}^2}^2+\lA f_2\rA_{\dot{H}^2}^2\right)\lA g\rA_{\dot H^{\mez}}^2
\end{align*}
which in turn implies \eqref{Z107}.
\end{proof}

\subsection{The Cauchy problem for the approximate equations}\label{S:end}
It remains to prove Proposition~\ref{P:initiale}. 

Rewrite the equation~\e{n2} under the form
\be\label{n2'}
\partial_tf-|\log(\varepsilon)|^{-1}\partial_x^2f =N_\eps(f),
\ee
with
$$
N_\eps(f)=\frac{1}{\pi}\int_\xR\frac{\partial_x\Delta_\alpha f}{1+\left(\Delta_\alpha f\right)^2}\left(1-\chi\left(\frac{|\alpha|}{\varepsilon}\right)\right)\dalpha.
$$
The next proposition shows that Equation~\e{n2'} can be seen as a sub-critical parabolic equation.

\begin{lemma}
There holds
\be\label{f3}
\begin{aligned}
\lA N_\eps(f)\rA_{\dot H^1}
\lesssim \varepsilon^{\mez}\lA f\rA_{\dot{H}^{\frac{5}{2}}}+\left(
1+\lA f\rA_{H^{\frac32}}\right)^2
\log\left(2+\lA f\rA_{\dot H^2}^2\right)^{\mez} \lA f\rA_{\dot H^{2}},
\end{aligned}
\ee
and
\be\label{f4}
\lA N_\eps(f)\rA_{L^2}\le C \left(1+\lA f\rA_{H^{\tdm}}\right)^2.
\ee
\end{lemma}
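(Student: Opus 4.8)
The plan is to split $N_\eps(f)$ into the full Muskat nonlinearity and the short-wavelength piece that the cut-off removes. Write $N_\eps(f)=I(f)-I_\chi(f)$, where $I(f)=\frac1\pi\int_\xR\frac{\partial_x\Delta_\alpha f}{1+(\Delta_\alpha f)^2}\dalpha$ and $I_\chi(f)=\frac1\pi\int_\xR\frac{\partial_x\Delta_\alpha f}{1+(\Delta_\alpha f)^2}\chi(|\alpha|/\eps)\dalpha$, so that $I_\chi(f)=-R_\eps(f)$ with $R_\eps$ as in \eqref{f10}. Combining \eqref{n1} and \eqref{eqT} gives the algebraic identity $I(f)=\mathcal{T}(f)f-\D f$, and hence $N_\eps(f)=\mathcal{T}(f)f-\D f+R_\eps(f)$.

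For the $\dot H^1$ bound \eqref{f3} I estimate the two pieces of $N_\eps(f)=I(f)-I_\chi(f)$ separately. The cut-off piece is handled by \eqref{f1} used with $\beta=0$ (admissible in \eqref{n15AN1}, since then $\mez+\beta=\mez\in(0,1)$), giving $\lA I_\chi(f)\rA_{\dot H^1}\les\eps^{\mez}\lA f\rA_{\dot H^{5/2}}+\lA f\rA_{\dot H^2}\lA f\rA_{\dot H^{3/2}}$. The full piece is controlled by \eqref{f2}, i.e. $\lA I(f)\rA_{\dot H^1}\les(1+\lA f\rA_{H^{3/2}})^2\log(2+\lA f\rA_{\dot H^2}^2)^{\mez}\lA f\rA_{\dot H^2}$. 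Adding the two and absorbing the cross term $\lA f\rA_{\dot H^2}\lA f\rA_{\dot H^{3/2}}$ into the logarithmic term (legitimate because $\log(2+\cdot)^{\mez}\ge(\log2)^{\mez}$ and $\lA f\rA_{\dot H^{3/2}}\le(1+\lA f\rA_{H^{3/2}})^2$) yields \eqref{f3}.

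For the $L^2$ bound \eqref{f4} I use the identity $N_\eps(f)=\mathcal{T}(f)f-\D f+R_\eps(f)$. The linear term is immediate, $\lA\D f\rA_{L^2}=\lA f\rA_{\dot H^1}\les\lA f\rA_{H^{3/2}}$, and the cut-off remainder was already bounded in \eqref{f9}, $\lA R_\eps(f)\rA_{L^2}\les\eps^{\mez}\lA f\rA_{\dot H^{3/2}}$. The new content is the estimate $\lA\mathcal{T}(f)f\rA_{L^2}\les(1+\lA f\rA_{H^{3/2}})^2$. Starting from \eqref{def:T(f)f}, Minkowski's inequality gives $\lA\mathcal{T}(f)f\rA_{L^2}\les\int_\xR\lA\partial_x\Delta_\alpha f\rA_{L^2}\lA\frac{(\Delta_\alpha f)^2}{1+(\Delta_\alpha f)^2}\rA_{L^\infty}\dalpha$, which I split at $|\alpha|=1$. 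I will use the two elementary Plancherel bounds $\lA\partial_x\Delta_\alpha f\rA_{L^2}\les|\alpha|^{-\mez}\lA f\rA_{\dot H^{3/2}}$ and $\lA\partial_x\Delta_\alpha f\rA_{L^2}\les|\alpha|^{-1}\lA f\rA_{\dot H^1}$, both coming from $|1-e^{-i\alpha\xi}|^2=4\sin^2(\alpha\xi/2)\le\min(4,|\alpha\xi|^2)$. On $|\alpha|\le1$ I use the first bound together with $\frac{(\Delta_\alpha f)^2}{1+(\Delta_\alpha f)^2}\le1$; since $|\alpha|^{-\mez}$ is integrable at the origin this contributes $\les\lA f\rA_{\dot H^{3/2}}$. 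On $|\alpha|>1$ I use the second bound together with $\frac{(\Delta_\alpha f)^2}{1+(\Delta_\alpha f)^2}\le|\Delta_\alpha f|\le2\lA f\rA_{L^\infty}/|\alpha|$; the resulting $|\alpha|^{-2}$ is integrable at infinity and contributes $\les\lA f\rA_{\dot H^1}\lA f\rA_{L^\infty}\les\lA f\rA_{H^{3/2}}^2$, using $\lA f\rA_{L^\infty}\les\lA f\rA_{H^{3/2}}$. Collecting everything gives \eqref{f4}.

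The main obstacle is this last $L^2$ estimate at the critical regularity. Near $\alpha=0$ the factor $\partial_x\Delta_\alpha f$ behaves like a second derivative, so a crude estimate would require $\lA f\rA_{\dot H^2}$, a norm not available in \eqref{f4}; the resolution is that the scale-$|\alpha|^{-\mez}$ bound, which only costs $\lA f\rA_{\dot H^{3/2}}$, remains integrable at the origin once paired with the boundedness of the nonlinear weight, while at $\alpha=\infty$ the borderline non-integrability is cured by the extra $|\alpha|^{-1}$ decay carried by that weight. By contrast \eqref{f3} is merely an assembly of the already-established bounds \eqref{f1} and \eqref{f2}.
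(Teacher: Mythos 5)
Your proof is correct. For \eqref{f3} it is the same argument as the paper's, which simply remarks that \eqref{f3} follows from \eqref{f1} and \eqref{f2}; your choice $\beta=0$ in \eqref{f1} and the absorption of the cross term $\lA f\rA_{\dot H^2}\lA f\rA_{\dot H^{3/2}}$ into the logarithmic term are precisely the implicit steps. For \eqref{f4} you use the same decomposition $N_\eps(f)=\mathcal{T}(f)f-\D f+R_\eps(f)$ as the paper, but where the paper quotes Proposition~2.3 of Alazard--Lazar, namely $\lA\mathcal{T}(f)f\rA_{L^2}\lesssim\lA f\rA_{\dot H^1}\lA f\rA_{\dot H^{3/2}}$, you prove the needed $L^2$ bound from scratch via Minkowski's inequality, the Plancherel bounds $\lA\partial_x\Delta_\alpha f\rA_{L^2}\lesssim\min\big\{|\alpha|^{-1/2}\lA f\rA_{\dot H^{3/2}},\,|\alpha|^{-1}\lA f\rA_{\dot H^1}\big\}$, and a splitting at $|\alpha|=1$. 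This is a genuinely self-contained, elementary substitute for the cited lemma; its output $\lA\mathcal{T}(f)f\rA_{L^2}\lesssim\lA f\rA_{\dot H^{3/2}}+\lA f\rA_{\dot H^1}\lA f\rA_{L^\infty}$ is slightly weaker than the bilinear homogeneous bound of the cited proposition (the linear term comes from bounding the weight $(\Delta_\alpha f)^2/(1+(\Delta_\alpha f)^2)$ by $1$ on $|\alpha|\le 1$ rather than exploiting its quadratic smallness there), but it is exactly what \eqref{f4} requires. Both routes are valid; the paper's is shorter by citation, yours has the advantage of not importing an external estimate.
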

\begin{proof}
The estimate~\e{f3} follows at once from~\e{f1} and~\e{f2}. 
To prove~\e{f4}, we decompose $N_\eps(f)=-\D f+\mathcal{T}(f)f+R_\eps(f)$ 
where $\mathcal{T}(f)$ is the operator already introduced in~\S\ref{S:Transfer} and 
the remainder $R_\eps(f)$ is as defined by~\e{f10}. 
Recall from Proposition~$2.3$ in~\cite{Alazard-Lazar} that
$$
\lA \mathcal{T}(f)f\rA_{L^2}\les \lA f\rA_{\dot{H}^1}\lA f\rA_{\dot{H}^{\tdm}}.
$$
So the wanted conclusion follows from the estimate~\e{f9} for~$R_\eps(f)$.
\end{proof}
Multiply the latter equation by $(I-\Delta)^{3/2} f$ and integrate in time, to obtain
\be\label{f5}
\mez \fract \lA f\rA_{H^{\tdm}}^2
+|\log(\varepsilon)|^{-1}\lA \D f\rA_{H^{\tdm}}^2
\le \lA N_\eps(f)\rA_{H^1}\lA f\rA_{H^2}.
\ee
Recall that
\be\label{f6}
\lA N_\eps(f)\rA_{\dot H^1}
\lesssim \varepsilon^{\mez}\lA f\rA_{\dot{H}^{\frac{5}{2}}}+\left(
1+\lA f\rA_{H^{\frac32}}\right)^2
\log\left(2+\lA f\rA_{\dot H^2}^2\right)^{\mez} \lA f\rA_{\dot H^{2}},
\ee
Since $\eps^\mez\ll \la \log(\eps)\ra^{-1}$ for $\eps\ll 1$, we can absorb 
the contribution of~$\varepsilon^{\mez}\lA f\rA_{\dot{H}^{\frac{5}{2}}}$ 
in the right-hand side of \e{f6} by the left-hand side 
of 
\e{f5}. On the other hand, since $5/2>2$, one can absorb 
the contribution of the other terms by using the H\"older's 
inequality. This proves an {\em a priori} estimate for~\e{n2'}. 
We also get easily a contraction estimate 
similar to (but much simpler) the one given by Proposition~\ref{P:2.10}. 
Then by using classical tools for semi-linear equations, 
we conclude that the Cauchy problem for~\e{n2'} can be solved by standard iterative scheme.

\vfill
\begin{flushleft}
\textbf{Thomas Alazard}\\
Universit{\'e} Paris-Saclay, ENS Paris-Saclay, CNRS,\\
Centre Borelli UMR9010, avenue des Sciences, 
F-91190 Gif-sur-Yvette\\
France.

\vspace{1cm}

\textbf{Quoc-Hung Nguyen}\\
ShanghaiTech University, \\
393 Middle Huaxia Road, Pudong,\\
Shanghai, 201210,\\
China

\end{flushleft}

\end{document}